\newcommand{\F}{\mathcal{F}}
\newcommand{\Q}{\mathcal{Q}}
\newcommand{\E}{\mathbb{E}}		%expectation
\newcommand{\R}{\mathbb{R}}
\renewcommand{\d}{\rho}
\newcommand{\ex}{\mathrm{ex}}
\newcommand{\cH}{\mathcal{H}}
\newcommand{\cg}{cube graph}	%may change its name later
\newcommand{\D}{\displaystyle}
\newtheorem{theorem}{Theorem}
\newtheorem{lemma}{Lemma}
\newcommand\oururl{\url{http://www.math.uiuc.edu/~jobal/cikk/hypercube}}
\newcommand\middleresult{2.15121}
\title{Upper bounds on the size of $4$- and $6$-cycle-free subgraphs of the hypercube}
\author{
 J\'{o}zsef Balogh\thanks{ University of Illinois, Urbana-Champaign, {\tt jobal@math.uiuc.edu}.
    This material is based upon work supported in part by NSF CAREER
Grant DMS-0745185, UIUC Campus Research Board Grant 11067, and OTKA Grant
K76099.} 
  \and Ping Hu \footnote{University of Illinois, Urbana-Champaign, {\tt pinghu1@math.uiuc.edu}.}
  \and Bernard  Lidick\'{y} \footnote{University of Illinois, Urbana-Champaign, 
  Charles University, Prague,
   {\tt lidicky@illinois.edu}.}
 \and Hong Liu
\thanks{University of Illinois, Urbana-Champaign,
  {\tt hliu36@illinois.edu}.}   
 }
\date{\today}
\begin{document}

\maketitle

\begin{abstract}
In this paper we modify slightly Razborov's flag algebra machinery
to be suitable for the hypercube.
We use this modified method to show that the maximum number of edges of a 
$4$-cycle-free subgraph of the $n$-dimensional hypercube is at most $0.6068$ times the number
of its edges. 
We also improve the upper bound on the number of edges
for $6$-cycle-free subgraphs of the $n$-dimensional hypercube from $\sqrt{2}-1$ to  
$0.3755$ times the number of its edges.
Additionally, we show that if the $n$-dimensional hypercube is considered
as a poset, then the maximum vertex density of three middle layers
in an induced subgraph without 4-cycles is at most
$2.15121\binom{n}{\lfloor n/2 \rfloor}$.
\end{abstract}

\section{Introduction}

Let $\Q_n$ be the graph of the $n$-dimensional hypercube ($n$-cube) whose vertex set is the set $\left\{ 0,1 \right\}^n$ of binary $n$-tuples, and two vertices are adjacent if and only if they differ in exactly one 
coordinate. The \emph{Hamming distance} between two $n$-tuples $u$ and $v$, denoted by $d(u,v)$, is the number of coordinates in which they differ. 
So $uv$ is an edge  of $\Q_n$ if and only if $d(u,v) = 1$. 
Note that the hypercube $\Q_n$ has $2^n$ vertices and $n2^{n-1}$ edges.

%In this paper, we are looking at subgraphs of hypercubes. 

 %A graph embedded in a hypercube inherits Hamming distances, and it is the Hamming distance between every pair of vertices determine the embedding up to cube automorphism. If we assign a Hamming distance to every pair of vertices of a graph $G$ so that there exists a map $f: V(G)\rightarrow V(\Q_n)$ satisfying $f(u)f(v)$ is an edge if and only $uv$ is an edge and $d(f(u),f(v)) = d(u,v)$, then $f$ is unique up to automorphism. If there exists another map $f'$ with the same property as $f$, then there exists a cube automorphism $\phi:V(\Q_n)\rightarrow V(\Q_n)$ such that $f = \phi f'$.
%we are only looking at the family of graphs $G$ which can be viewed as a spanning subgraph of $\Q_n$ for some $n$ (denoted by $G\subseteq \Q_n$). 

%\comment{maybe move D(U) to next section}
Let $e(G)$ denote the number of edges of a graph $G$. For a graph $F$, we
define $\ex_{\Q}(n,F)$ to be the maximum number of edges of an $F$-free subgraph of $\Q_n$
and define 
 $$\pi_{\Q}(F) = \D\lim_{n\to\infty}\frac{\ex_{\Q}(n,F)}{e(\Q_n)}.$$
Note that the existence of the limit follows from an easy averaging argument that 
${\ex_{\Q}(n,F)}/{e(\Q_n)}$ is non-increasing as $n$ increases. 

%By defining the \emph{density} $d(G) = e(G)/e(\Q_n)$ with $n=\dim(G)$, we can also write $\pi_{\Q}(F)$ as $\D\lim_{n\to\infty} \max_{F \not\subset G,\dim(G) = n} d(G)$.

Erd\H{o}s~\cite{Erdos:1984,Erdos:1990}  was the first one  who considered
Tur\'{a}n type problems for the hypercube. He proposed a problem of determining
$\ex_{\Q}(n,C_{2t})$, suggesting that for all $t > 2$ perhaps $o(e(\Q_n))$ was
an upper bound.
It turned out to be false for $t=3$ as Chung~\cite{Chung:1992} and Brouwer, 
Dejter and Thomassen~\cite{Brouwer:1993} found a $4$-coloring of the hypercube without a monochromatic $C_6$.  
This was later improved by Conder~\cite{Conder:1993} 
to a $3$-coloring. This implies that $\ex_{\Q}(n,C_{6}) \geq \frac{1}{3}e(\Q_n)$. On the other hand, the best known
upper bound obtained by Chung~\cite{Chung:1992} is $\ex_{\Q}(n,C_{6}) \leq (\sqrt{2}-1+o(1))e(\Q_n)$.

Chung~\cite{Chung:1992} also showed that Erd\H{o}s was right for even $t \geq 4$ by proving
that $\ex_{\Q}(n,C_{2t}) = o(e(\Q_n))$. F\"uredi and \"Ozkahya~\cite{Furedi:2009b, Furedi:2009} complemented the
previous result by showing $\ex_{\Q}(n,C_{2t}) = o(e(\Q_n))$ for all odd $t \geq 7$.
Their approaches were recently unified by Conlon~\cite{Conlon:2010}.
Despite the efforts in~\cite{Alon:2006,Axenovich:2006,Conlon:2010} the case $\ex_{\Q}(n,C_{10})$ still
remains unsolved.

Erd\H{o}s~\cite{Erdos:1984} was particularly interested in $\ex_{\Q}(n,C_{4})$.
He conjectured that the answer is $\pi_{\Q}(C_4) = 1/2$ and offered \$100 for a solution. 
Best known lower bound $\frac{1}{2}(1 + \frac{1}{\sqrt{n}})e(\Q_n)$ (valid when $n$ is a power of 4) 
on $\ex_{\Q}(n,C_4)$
was obtained by Brass, Harborth and Nienborg~\cite{Brass:1995}. 
The upper bound on $\pi_{\Q}(C_4)$ of $0.62284$ obtained by Chung~\cite{Chung:1992} was
recently improved by Thomason and Wagner~\cite{Thomason:2009} by a computer
assisted proof to $0.62256$. They also claimed that  $\pi_{\Q}(C_4) \le 0.62083$ 
can be obtained with the same technique.

Razborov~\cite{Razborov:2007} developed a systematic approach to bound 
densities of subgraphs called flag algebra. 
This method can be applied to various problems~\cite{Grzesik:2011,Hatami:2011,Hladky:2009,Kral:2011}.
One nice exposition of applying the method to Tur\'{a}n density is in \cite{Baber:2011},
for a recent development see~\cite{Falgas:2011}. 
%\comment{Add Dan Kral, $C_5$ in $C_3$-free and maybe something else.}
%The original method not only works for edge densities of subgraphs of complete graphs,  
%it also works for densities of other structures and on digraphs~\cite{Hatami:2011,Hladky:2009,Kral:2011}.
%\comment{PH: those are not modifications.}
We present a modification of the method for subgraphs of the hypercube. 
By applying our modified flag algebra method we obtained
improvements on the upper bounds on $\pi_{\Q}(C_4)$ and $\pi_{\Q}(C_6)$.

%We further improve the result of Thomason and Wagner to $0.6068$
%by using flag algebra method. Flag algebra method was developed by 
%Razborov~\cite{Razborov:2007} as a systematic approach to bound Tur{\'a}n density.
%One nice exposition to the method is in \cite{Baber:2011}. The original
%method works for subgraphs of complete graphs.  We present a modification of
%the method for subgraphs of hypercubes. By applying the modified flag
%algebra methods we obtain the following result.

%A systematic approach to bound Tur{\'a}n density is provided by the flag algebra Method, developed by Razborov~\cite{Razborov:2007}. One nice exposition is in \cite{Baber:2011}. 
%We adapt this method to subgraphs of hypercubes from originally subgraphs of complete graphs. With the adapted method we are able to improve the upper bound to $0.6068$ in the limit.

%We are interested in a Tur{\'a}n type problem: How many edges can a subgraph of $\Q_n$ have that contains no $4$-cycles. 
%\comment{BL: We should mention that it was studied for different lengths of cycles too!}
%Conlon~\cite{Conlon:2010}
%Erd\H{o}s~\cite{Erdos:1984} long ago conjectured that $\pi_{\Q}(\Q_2) = 1/2$. In 1992, Chung~\cite{Chung:1992} gave an upper bound on $\pi_{\Q}(\Q_2)$ of $0.62284$. The best known upper bound is given by Thomason and Wagner \cite{Thomason:2009}. They proved with use of a computer that $\pi_{\Q}(\Q_2) \le 0.62256$ and claimed that using the same technique $0.62083$ is achievable.

\begin{theorem}\label{thm:main}
$\pi_{\Q}(C_4) \le 0.6068$.
\end{theorem}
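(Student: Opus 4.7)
The plan is to adapt Razborov's flag algebra method to the hypercube setting and apply it to $F = C_4$. First I would fix a small dimension $d$ (likely $d = 4$, $5$, or $6$, balancing SDP size against sharpness of the bound) and enumerate, up to the action of $\mathrm{Aut}(\Q_d)$, the family $\F_d$ of $C_4$-free spanning subgraphs of $\Q_d$. For a $C_4$-free $G \subseteq \Q_n$ and $H \in \F_d$, let $\d(H,G)$ denote the probability that a uniformly random sub-cube $Q$ of dimension $d$ in $\Q_n$ satisfies $G[Q] \cong H$ as a subgraph of $\Q_d$. A double counting over sub-cubes yields, in the limit $n \to \infty$, the two identities
\[
\frac{e(G)}{e(\Q_n)} \;=\; \sum_{H \in \F_d} \frac{e(H)}{e(\Q_d)}\, \d(H,G) \qquad \text{and} \qquad \sum_{H \in \F_d} \d(H,G) = 1.
\]

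Next I would introduce types and flags adapted to the cube: a type $\sigma$ is a labelled induced sub-cube of some small dimension $k$ together with its $C_4$-free edge pattern, and a $\sigma$-flag is a larger element of some $\F_m$ containing a labelled copy of $\sigma$ on a fixed coordinate sub-cube. For each $\sigma$, the modified averaging identity produces, in the limit, non-negative quantities of the form $\sum_\sigma \langle M_\sigma, v_\sigma v_\sigma^\top \rangle$, where each $M_\sigma \succeq 0$ and $v_\sigma$ is the vector of $\sigma$-flag densities. Expanding these squared flag products in the $\d(H,G)$ basis gives linear inequalities in the $\d(H,G)$'s. Combining them with the two identities above, I would search via semidefinite programming for PSD matrices $M_\sigma$ certifying
\[
\sum_{H \in \F_d}\left(0.6068 - \frac{e(H)}{e(\Q_d)}\right) \d(H,G) \;\ge\; 0,
\]
which is exactly the desired inequality $e(G) \le 0.6068 \cdot e(\Q_n)$.

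The main obstacle is twofold. First, the combinatorial bookkeeping — enumerating $\F_d$, the types, the flags, and the expansion coefficients of $v_\sigma v_\sigma^\top$ — must use a sub-cube averaging operator rather than the usual random-subset averaging of Razborov, since the rigidity of $\Q_n$ changes which pairs of flags can coexist inside a common $\F_d$-structure; this is precisely the modification of the machinery advertised in the introduction, and getting the averaging coefficients right is what makes the whole SDP meaningful. Second, the SDP is solved in floating point, so the numerical $M_\sigma$ must be rounded to rational PSD matrices that still deliver the bound $0.6068$ — typically by identifying the tight constraints, projecting onto the corresponding affine subspace, and verifying positive semidefiniteness exactly. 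With $d$ chosen large enough to improve on the Thomason--Wagner bound $0.62083$, and with the certificates posted at \oururl for reproducibility, this programme should yield the stated upper bound.
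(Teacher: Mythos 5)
Your approach is essentially the paper's: sub-cube averaging with cube-adapted types and flags (their ``feasible maps'' are exactly your sub-cube averaging operator), expansion over the $C_4$-free spanning subgraphs of a small cube $\Q_s$, and an SDP certificate perturbed to exact PSD matrices. The only discrepancy is the parameter choice: the paper achieves $0.6068$ with $s=3$ (99 subgraphs in $\cH_3$, two types of dimension one with both vertices labelled, flags of dimension two on four vertices), and its Conclusion reports that $\Q_4$ already has $3212821$ $C_4$-free spanning subgraphs, putting your suggested $d\in\{4,5,6\}$ beyond the reach of their SDP solver.
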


\begin{theorem}\label{thm:C6}
$\pi_{\Q}(C_6) \le 0.3755$.
\end{theorem}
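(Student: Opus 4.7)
The plan is to apply the hypercube-adapted flag algebra machinery already developed in the proof of Theorem~\ref{thm:main}, but with $C_6$ as the forbidden configuration. Recall that the classical flag algebra framework averages subgraph densities over random $k$-vertex subsets; in the cube version one instead averages over random $k$-dimensional subcubes of $\Q_n$, and the fundamental local variables are densities of $C_6$-free edge-patterns on $\Q_k$, taken up to the hypercube automorphism group. The goal is to show that no $C_6$-free $G \subseteq \Q_n$ can have edge density exceeding $0.3755$, comfortably below Chung's $\sqrt{2}-1 \approx 0.4142$.

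The first step is to fix a small dimension $k$ and enumerate all edge subsets of $\Q_k$ that contain no $C_6$; each isomorphism class under cube automorphisms becomes a cube pattern. For a $C_6$-free $G \subseteq \Q_n$ and each cube pattern $F$, let $d(F,G)$ be the probability that a random $k$-subcube of $\Q_n$, together with the restriction of $G$ to it, is isomorphic to $F$. These densities sum to $1$, and averaging over $(k+1)$-subcubes of $\Q_n$ and then over their $k$-subcubes yields linear chain relations between densities at consecutive levels. The edge density of $G$ is a fixed linear functional of the $d(F,G)$.

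The second step is to introduce types $\sigma$ (rooted sub-patterns in some $\Q_t$ with $t<k$) and, for each type, the corresponding $\sigma$-flags extending $\sigma$ inside $\Q_k$. Multiplying two $\sigma$-flags and averaging uniformly over the random completion produces a matrix $M_\sigma$ whose entries are limit densities; because $M_\sigma$ is the second-moment matrix of a random vector of flag-densities, it is positive semi-definite in the limit $n \to \infty$. Consequently, for any choice of matrices $Q_\sigma \succeq 0$, the quantity $\sum_\sigma \langle Q_\sigma, M_\sigma \rangle$ is a non-negative linear combination of limit densities and can be added to the edge-density expression to slack the upper bound. Minimizing this slack over all admissible $\{Q_\sigma\}$ is a semi-definite program, whose numerical optimum, together with a rigorous rounding of the dual certificate to exact arithmetic, produces the constant $0.3755$.

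The principal obstacle is twofold. Combinatorially, $C_6$ spans six vertices in up to three coordinate directions, so the catalogue of $C_6$-free patterns on $\Q_k$ (as well as their types and $\sigma$-flags) is considerably larger and less symmetric than the $C_4$-free catalogue underlying Theorem~\ref{thm:main}; enumerating and orbit-reducing these patterns under the hyperoctahedral symmetry group is the first real hurdle. Computationally, the SDP's size grows quickly with $k$, so one must pick $k$ (and a subset of types) just large enough to push the bound below $\sqrt{2}-1$ while keeping the program tractable. Finally, turning a floating-point optimizer's output into a rigorous proof requires a careful rationalization step that preserves positive semi-definiteness of each $Q_\sigma$; this last step is standard in flag algebra applications but must be executed with care so that the resulting certificate cleanly verifies the numerical bound $0.3755$.
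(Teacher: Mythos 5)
Your proposal matches the paper's method essentially exactly: the authors adapt Razborov's flag algebras to the hypercube (averaging over feasible embeddings into subcubes), enumerate the $C_6$-free spanning subgraphs of $\Q_3$ (they find $116$), use the two types of dimension one with two labeled vertices and all flags on four vertices, solve the resulting SDP with CSDP, and perturb the numerical certificate to obtain a rigorously positive-semidefinite one giving $\pi_{\Q}(C_6)\le 0.3755$. The only thing you leave unspecified is the concrete choice $s=3$ with flags of dimension two, but your general framework and the rounding/rationalization step are exactly what the paper does.
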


These results were independently proved by Baber~\cite{Baber:2012} which originally appeared in his PhD thesis in March 2011. Baber also estimated vertex Tur\'{a}n density of $\Q_3$ and determined vertex Tur\'{a}n density of $\Q_3$ with one vertex removed for hypercubes.

Both proofs are computer assisted as the number of considered cases
is too large to be computed by hand without an extreme suffering (of students and a postdoc).
All the programs as well as their inputs and outputs can be obtained at
\oururl.

%%%%%%%%%%%%%%%%%%%%%%%%%%%%%%%%%%%%%%%%%%%%
In addition to spanning subgraphs of the hypercube, flag algebras can be used
also for induced subgraphs of the hypercube. However, we present the result
in a lattice settings because of its original motivation.
For a family $F$ of subsets of $[n] = \{1, 2, \ldots, n\}$ ordered by inclusion, and a partially ordered set $P$, we say that $F$ is $P$-free if it does not contain a subposet isomorphic to $P$. Let $\ex(n, P)$ be the largest size of a $P$-free family of subsets of $[n]$. 
Let $Q_2$ be the poset with distinct elements $a, b, c, d$ where $a < b, c < d$; i.e., the $2$-dimensional Boolean lattice.
Axenovich, Manske and Martin \cite{Axenovich:2011} showed that $2N - o(N)
\le \ex(n, Q_2 ) \le 2.283261N + o(N)$ where $N = \binom{n}{\lfloor n/2\rfloor}$.
They also proved that the largest $Q_2$-free family of subsets of $[n]$ having
at most three different sizes has at most $(3+\sqrt{2})N/2$ members. 
%For their latter result, using Flag Algebras and the same method, we can easily achieve the same bound $(3+\sqrt{2})N/2$ and (possibly) get better bound. Now we show how to get $(3+\sqrt{2})N/2$.
Their latter result can be improved by using flag algebras. We show how to
achieve the same bound $(3+\sqrt{2})N/2$ that can be verified by hand. With help
of computers we then improve the bound to $\middleresult N$.

\begin{theorem}\label{thm:middle}
The largest $Q_2$-free family of subsets of $[n]$ having at most three different
sizes has at most $\middleresult N$ members
where $N = \binom{n}{\lfloor n/2\rfloor}$.
\end{theorem}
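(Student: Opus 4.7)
The plan is to use the flag-algebra machinery developed in this paper, specialized to the three-layer induced subposet of the Boolean lattice. The starting observation is that within three size-levels $k_1<k_2<k_3$ the only way a copy of $Q_2$ can arise in $F$ is as a ``stretched diamond'' with $A\in F_1$, two distinct $B,C\in F_2$, and $D\in F_3$ satisfying $A\subset B\subset D$ and $A\subset C\subset D$: since $Q_2$ has a unique bottom and unique top, the two incomparable middle elements must lie at the same size, which with only three allowed sizes forces them both to $k_2$. Hence $Q_2$-freeness reduces to a single local constraint: for every pair $(A,D)\in F_1\times F_3$ with $A\subset D$, the open interval $(A,D)$ contains at most one element of $F_2$.

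To first reach the hand-checkable $(3+\sqrt{2})/2$ bound, I would introduce the LYM-normalized densities $x_i=|F_i|/\binom{n}{k_i}$ and the layer weights $\beta_i=\binom{n}{k_i}/N\le 1$, so that $|F|/N=\beta_1 x_1+\beta_2 x_2+\beta_3 x_3$ with the worst case when all $k_i$ are near $n/2$. Counting the chains $(A,B,D)\in F_1\times F_2\times F_3$ with $A\subset B\subset D$ in two different ways --- once by iterating over $B$ (producing a quadratic expression in the $x_i$'s after chain-averaging) and once by iterating over $(A,D)$-pairs and invoking the local constraint (which caps the total by the number of comparable $F_1\times F_3$ pairs) --- yields a quadratic inequality linking $x_1,x_2,x_3$. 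Maximizing $\beta_1 x_1+\beta_2 x_2+\beta_3 x_3$ subject to this relation and $0\le x_i\le 1$ should reproduce the $(3+\sqrt{2})/2$ bound of Axenovich-Manske-Martin; this is precisely the low-order piece of the flag algebra for this problem and can be written out by hand.

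To push the bound down to $\middleresult$ I would deploy the full modified flag algebra as in the earlier sections of the paper. Take the ambient structures to be small bounded-order collections of elements of $[n]$ distributed across the three levels, together with their induced inclusion relations and $F$-membership markers; set to zero every flag density that contains a stretched diamond; and for each rooted type $\tau$ assemble the Cauchy-Schwarz blocks $M_\tau$ exactly as done for Theorems~\ref{thm:main} and~\ref{thm:C6}. Solving the resulting SDP on a computer and rounding the PSD solution to a certifiably PSD matrix $Q_\tau$ produces a sum-of-squares identity which, after the standard flag-algebra unraveling, gives $|F|/N\le\middleresult$. The main obstacle is identical to the one encountered in Theorems~\ref{thm:main} and~\ref{thm:C6}: the flag enumeration is large enough that the PSD certificate must be produced and verified by machine, with source code posted at \oururl. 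A secondary subtlety specific to Theorem~\ref{thm:middle} is that the bound must hold uniformly in the triple $(k_1,k_2,k_3)$ rather than just the three central layers; however, once the flag-algebra inequality is established for the normalized quantities $x_i$, the weight inequality $\beta_i\le 1$ handles all non-central choices at no additional cost.
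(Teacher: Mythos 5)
Your proposal takes essentially the same route as the paper: reduce to the three middle layers, observe that with only three admissible sizes a copy of $Q_2$ must be a ``stretched diamond'' (bottom in the lowest layer, top in the highest, two incomparable middles at the middle size), set up a flag algebra on the Hasse diagram of small three-layer configurations, and solve the resulting SDP by computer, rounding to a certifiably PSD matrix. The one cosmetic difference is in the hand-checkable warm-up to $(3+\sqrt{2})/2$: you phrase it as a double count of chains $(A,B,D)$ giving a quadratic inequality in the normalized densities $x_i$, whereas the paper presents it explicitly as the baby flag algebra on the $11$ $Q_2$-free subsets of $M_2$ with two one-labeled-vertex flags and a $2\times 2$ PSD matrix; as you yourself note, these are the same thing in different clothing. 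One caveat worth flagging: both your write-up and the paper assert without proof that it suffices to treat the three \emph{consecutive} central layers (you invoke $\beta_i\le 1$, the paper says it is ``easy to see''); the weight inequality alone does not obviously handle triples $(k_1,k_2,k_3)$ that are near the middle but not consecutive, where the local constraint and the chain counts change shape, so if you were writing this up in full you would need to supply that reduction explicitly rather than lean on $\beta_i\le 1$.
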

%%%%%%%%%%%%%%%%%%%%%%%%%%%%%%%%%%%%%%%%%%%%

In the next section we give a brief introduction to the flag algebra method
and describe our modification of it to subgraphs of the hypercube. 
We refer the interested reader to the seminal paper of Razborov~\cite{Razborov:2007} for a detailed exposition of the method. In Section~\ref{sec:example} we apply the method with a simple setting and obtain an upper bound $\pi_{\Q}(C_4) \le 2/3$. 
The main purpose of Section~\ref{sec:example} is to make the reader comfortable 
with the terminology and describe the proof technique.
Finally, in Sections~\ref{sec:proof} and ~\ref{sec:proofC6} we give ideas of the proofs of Theorems~\ref{thm:main} and~\ref{thm:C6}, respectively. We do not include
all the technicalities of the proofs as the number of considered graphs is too large.
The interested reader may see all the technical details at \oururl.
The last section is devoted to giving a proof idea of Theorem~\ref{thm:middle}.
\section{The flag algebra method for the hypercube}
%We are using flag algebra, but the original version does not suit hypercubes, so we change definitions of flag algebras but keep terms unchanged.
In this section we give a brief introduction to the flag algebra method mixed with the 
necessary modifications for subgraphs of the hypercube.
We say that a graph $G$ is a \emph{\cg} if $G$ is a subgraph of $\Q_n$ for some $n$, so $V(G)\subseteq \left\{ 0,1 \right\}^n$ and if $uv$ is an edge of $G$ then $d(u,v) = 1$. 
%We will present a modification of flag algebra method on {\cg}s.

%We start with some definitions for {\cg}s.
Given a {\cg} $G$ and a subset $U$ of $V(G)$, we denote the subgraph of $G$ induced by $U$ by $G[U]$.
It is easy to see that $G[U]$ is also a \cg.

Given a subset $U$ of $\left\{ 0,1 \right\}^n$, let $D(U)$ be the set of
coordinates $i$ such that there exist $v,w\in U$ which differ in the coordinate $i$
($v$ and $w$ may differ in more coordinates). 
If $U=\left\{u,v\right\}$, then we abbreviate $D(\left\{u,v\right\})$ to $D(u,v)$. Let $d(U) = |D(U)|$ and 
again $d(\left\{u,v\right\})$ is abbreviated to $d(u,v)$, as it is the Hamming distance of $u$ and $v$.
We define the \emph{dimension} of a {\cg} $G$ to be $\dim(G) = d(V(G))$. 
Given a vertex $v\in \{0,1\}^n$, let $v[i]$ be its $i^{\text{th}}$ coordinate.
Given a vertex set $U\subseteq \{0,1\}^n$ of dimension $r$, let $Q(U)$ be the
set of vertices of the unique $r$-cube containing $U$, i.e.
$$Q(U)=\left\{ v:v\in \{0,1\}^n,\forall u\in U,i\notin D(U),v[i]=u[i] \right\}.$$

Given $V\subseteq \left\{ 0,1 \right\}^m$ and $U\subseteq \left\{ 0,1 \right\}^n$, we say a map $f: V\rightarrow U $ is \emph{Hamming distance preserving} if $\forall u,v\in V, d(u,v) = d(f(u),f(v))$. Note that a Hamming
distance preserving map is injective since $d(u,v) = 0 \textrm{ iff } u=v$. When $U=V=\left\{ 0,1 \right\}^n$, such $f$ is a \emph{cube automorphism}.
We call a map $f: V\rightarrow U$ \emph{feasible} if there exists a Hamming 
 distance preserving map $\tilde{f}: Q(V)\rightarrow Q(U) $ such that 
 $f(v) = \tilde{f}(v)$ for all $v \in V$.
Given two {\cg}s $H$ and $G$, we say $H$ and $G$ are \emph{feasible isomorphic} 
(denoted by $H \simeq G$) if there exists a feasible bijection 
$f: V(H)\rightarrow V(G)$ satisfying $\forall u,v\in V(H), f(u)f(v) \in E(G)\textrm{ iff } uv\in E(H)$. Such $f$ is called a \emph{feasible isomorphism} from $H$ to $G$. 
See Figure~\ref{fig-cubes} for an example.

\begin{figure}[htbp]
\begin{center}
\def\e{1.3}
\def\lw{1.1pt}

\def\cube
{
\draw[dotted]
\foreach \x in {0,1}
{
(0.5*\x*\e,0.5*\x*\e)  node[noin] (a1\x) {} 
--  ++(\e,0) node[noin] (a2\x) {}
--  ++(0,\e) node[noin] (a3\x) {}
--  ++(-\e,0) node[noin] (a4\x) {}
-- (a1\x)
}
\foreach \y in {1,2,3,4}
{
(a\y0) -- (a\y1)
}
(a10) node[insep] {}
(a20) node[insep] {}
;
}

\def\confA{
\cube
\draw[line width=\lw]
(a10) -- (a20) -- (a30) -- (a40)
(a30) node[insep] {}
(a40) node[insep] {}
(a41) node[insep] {}
(a21) node[insep] {}
;
\draw (0.5*\e,-0.6) node {$G_1$};
}

\def\confB{
\cube
\draw[line width=\lw]
(a10) -- (a11) -- (a41) -- (a40)
(a11) node[insep] {}
(a40) node[insep] {}
(a41) node[insep] {}
(a31) node[insep] {}
;
\draw (0.5*\e,-0.6) node {$G_2$};
}

\def\confC{
\cube
\draw[line width=\lw]
(a10) -- (a20) -- (a30) -- (a31)
(a30) node[insep] {}
(a31) node[insep] {}
(a41) node[insep] {}
(a21) node[insep] {}
;
\draw (0.5*\e,-0.6) node {$G_3$};
}

\begin{tikzpicture}
[insep/.style={inner sep=1.7pt, outer sep=0pt, circle, fill}, 
noin/.style={inner sep=0pt, outer sep=0pt, circle, fill},
list/.style={inner sep=0.0pt, outer sep=0pt, rectangle,fill=white},
precol/.style={inner sep=1.8pt, outer sep=0pt, circle, fill},
three/.style={inner sep=1.5pt, outer sep=0pt, regular polygon,regular polygon sides=3, draw,fill=white},
four/.style={inner sep=1.8pt, outer sep=0pt, regular polygon,regular polygon sides=4, draw,fill=white},
five/.style={inner sep=1.8pt, outer sep=0pt, regular polygon,regular polygon sides=5, draw,fill=white}
]

\begin{scope}[xshift=0cm, yshift = 0cm] 
\confA 
\end{scope}

\begin{scope}[xshift=3.5cm, yshift = 0cm] 
\confB
\end{scope}

\begin{scope}[xshift=7cm, yshift = 0cm] 
\confC
\end{scope}

\end{tikzpicture}
\end{center}
\caption{All $G_1,G_2$ and $G_3$ are isomorphic. However, only  $G_1$ and $G_2$ are feasible isomorphic.}
\label{fig-cubes}
\end{figure}
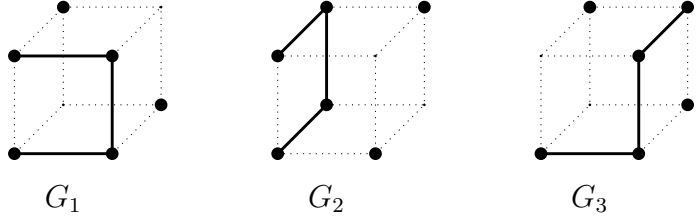

It is not hard to see that a feasible map preserves the dimension. Indeed, we have a stronger statement.

\begin{lemma} \label{Dmap}
Let $V\subseteq \{0,1\}^m, U\subseteq \{0,1\}^n$ and let $f:V \rightarrow U$ be a feasible map. 
Then there exists an injective map $\phi:D(V)\rightarrow D(U)$ such that for any subset $V'\subseteq V$, we have $D(f(V')) = \phi(D(V'))$. Given $\phi$ and $f(v)$ for any $v\in V$, then $f$ is uniquely determined.
\end{lemma}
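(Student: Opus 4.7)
The plan is to exploit the Hamming-distance preserving extension $\tilde{f}:Q(V)\to Q(U)$ that feasibility supplies, define $\phi$ as the induced action on coordinate directions, and then translate distance statements into statements about the sets $D(\cdot)$. First I would observe that since $\tilde{f}$ is Hamming-distance preserving (hence injective and diameter-preserving on subsets), the image $\tilde{f}(Q(V))$ is a subcube of $Q(U)$ of dimension $\dim Q(V)=d(V)$; in particular $D(\tilde{f}(Q(V)))$ has exactly $d(V)$ elements and sits inside $D(Q(U))=D(U)$, giving the target codomain for $\phi$.

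Next, for each $i\in D(V)$ and $v\in Q(V)$ write $v^{\oplus i}$ for the neighbor of $v$ in $Q(V)$ obtained by flipping the $i$-th coordinate; then $\tilde{f}(v)$ and $\tilde{f}(v^{\oplus i})$ differ in a single coordinate of $D(U)$. The central claim is that this coordinate depends only on $i$, not on $v$, and I would prove it by the standard $2$-face argument: for any second direction $i'\in D(V)$, the $4$-cycle $v,\,v^{\oplus i},\,v^{\oplus i\oplus i'},\,v^{\oplus i'}$ in $Q(V)$ is sent by $\tilde{f}$ to a $4$-cycle in the ambient hypercube, which is itself a $2$-face, so opposite edges are parallel. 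Any two $i$-direction edges of $Q(V)$ are connected by a chain of such $2$-faces, obtained by walking between their endpoints one coordinate flip at a time, so all $i$-direction edges map to edges in a single common direction, which I take as $\phi(i)$. Injectivity of $\phi$ is then immediate: if $\phi(i)=\phi(i')$ for distinct $i,i'\in D(V)$, then $\tilde{f}(v^{\oplus i})$ and $\tilde{f}(v^{\oplus i'})$ would both be the unique neighbor of $\tilde{f}(v)$ along direction $\phi(i)$, contradicting injectivity of $\tilde{f}$.

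For the identity $D(f(V'))=\phi(D(V'))$, I would first prove the two-point case $D(f(u),f(w))=\phi(D(u,w))$ for $u,w\in V$: a geodesic from $u$ to $w$ in $Q(V)$ flips each coordinate of $D(u,w)$ exactly once, and $\tilde{f}$ transports it to a walk that flips each coordinate of $\phi(D(u,w))$, these flips being distinct by injectivity of $\phi$. Taking unions over pairs in $V'$ yields the stated equality. Uniqueness follows at once: once $\phi$ and some $f(v_0)$ are fixed, for every other $w\in V$ the value $f(w)$ must equal the vertex obtained from $f(v_0)$ by flipping exactly the coordinates in $\phi(D(v_0,w))$. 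I expect the only delicate point to be the well-definedness of $\phi$ through the $2$-face argument, i.e.\ propagating the coordinate-of-flip along parallel edges of $Q(V)$; everything else reduces to bookkeeping once that invariant is established.
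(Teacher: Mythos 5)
Your proof is correct, but it takes a genuinely different route from the paper's. The paper's proof fixes a single base vertex $v\in V$, defines $\phi(l_i)=l_i'$ by looking at the one coordinate in which $\tilde f(v_i)$ differs from $\tilde f(v)$, then asserts the pointwise identity $D(\tilde f(u),\tilde f(v))=\phi(D(u,v))$ as a consequence of distance preservation, and finally lifts this to arbitrary pairs $v_1,v_2$ via the symmetric-difference identity $D(\tilde f(v_1),\tilde f(v_2))=D(\tilde f(v),\tilde f(v_1))\bigtriangleup D(\tilde f(v),\tilde f(v_2))$, using injectivity of $\phi$ to pull $\phi$ through $\bigtriangleup$. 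You instead define $\phi$ intrinsically as the direction that an $i$-direction edge of $Q(V)$ is sent to, and you spend your effort on well-definedness: the observation that a distance-preserving image of a $2$-face is again a $2$-face, followed by a chain argument connecting any two $i$-direction edges. You then recover the pairwise identity by tracking a geodesic. Both arguments are valid. Your $2$-face-propagation route is more geometric, makes the ``cube isometries send parallel edges to parallel edges'' fact explicit, and carefully closes the well-definedness gap; the paper's route is shorter, defining $\phi$ from a single basepoint so that well-definedness is never an issue, at the cost of leaving the step $D(\tilde f(u),\tilde f(v))=\phi(D(u,v))$ largely to the reader (it can be checked by comparing $d(u,v_i)$ with $d(u,v)$ and using injectivity plus a cardinality count, a check your geodesic argument replaces). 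One minor stylistic difference: the paper reaches the general pair identity by symmetric differences relative to the fixed basepoint, whereas you get it directly from the geodesic transport, so you never need the $\bigtriangleup$ manipulation.
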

\begin{proof}
As $f$ is feasible, there exists a Hamming distance preserving map $\tilde{f}: Q(V)\rightarrow Q(U)$ 
such that $f(v) = \tilde{f}(v)$ for every $v\in V$.
We start by inspecting $\tilde{f}$. Let $d(V) = k$ and 
$D(V)=\{l_1,\dots,l_k\}$. Pick a vertex $v\in V$ and let $v_{i}\in Q(V)$ be the vertex 
which differs from $v$ only in the coordinate $l_{i}$. As $\tilde{f}$ is Hamming distance 
preserving, $\tilde{f}(v_i)$ differs from $\tilde{f}(v)$ in only one coordinate, say  
$l_i'$. Then we have $l_i'\ne l_j'$ for $i\ne j$ since $\tilde{f}$ is injective. 
Next we define $\phi(l_i) = l_i'$ for all $1 \leq i \leq k$ and show that it satisfies our needs.
Because $\tilde{f}$ is Hamming distance preserving, for a vertex $u\in Q(V)$ we have 
$D(\tilde{f}(u),\tilde{f}(v))=\phi(D(u,v))$, which means $f$ is uniquely determined by $\phi$ and $f(v)$.
Furthermore, for any two vertices $v_1,v_2\in Q(V)$ we have 
$D(\tilde{f}(v_1),\tilde{f}(v_2))=\phi(D(v_1,v_2))$ since 
$$D(\tilde{f}(v_1),\tilde{f}(v_2))=D(\tilde{f}(v),\tilde{f}(v_1))\bigtriangleup D(\tilde{f}(v),\tilde{f}(v_2))$$
and $\phi(D(v_1,v_2))=\phi(D(v,v_1))\bigtriangleup\phi(D(v,v_2))$,
where $\bigtriangleup$ means the symmetric difference of the sets. 
Then for any subset $V'\subseteq V$, we have $D(f(V')) = \phi(D(V'))$.
\end{proof}

%Let $F$ and $G$ be {\cg}s. We say $G$ is \emph{$F$-free} (denoted by $F\not\subset G$) if there is no Hamming distance preserving map $f: V(F)\rightarrow V(G)$ satisfying $\forall uv\in E(F), f(u)f(v) \in E(G)$. If such $f$ exists, then we say $F$ is a \emph{subgraph} of $G$ (denoted by $F\subseteq G$). 
%We define the \emph{dimension} of a {\cg} $G$ to be $\dim(G) = \min\{n:G\subseteq \Q_n\}$.

Let $F$ be a fixed graph.
Our goal is to compute an upper bound on $\pi_{\Q}(F)$. Let $\mathcal{H}_s$ be the family of all $F$-free spanning subgraphs of $\Q_s$, up to cube automorphism.

Given any two {\cg}s $H$ and $G$, we define $p(H,G)$ to be the probability that a 
feasible map $f:V(H)\rightarrow V(G)$ chosen uniformly at random satisfies $G[Im(f)]\simeq H$.
% is a feasible isomorphism from $H$ to $G[Im(f)]$.
Note that if $H\in \mathcal{H}_s$ and $V(G) = V(\Q_n)$ then $\Q_n[Im(f)]\simeq \Q_s$. 

Given a {\cg} $G$, let $n=\dim(G)$, then define its edge density $\d(G) = e(G)/e(\Q_n)$.
Let $G$ be an $F$-free spanning subgraph of $\Q_n$. By averaging over all $H \in \mathcal{H}_s$ we have 
\begin{equation*}
\d(G) = \sum_{ H \in \mathcal{H}_s}\d(H)p(H,G)
\end{equation*}
as $\sum_{ H \in \mathcal{H}_s}p(H,G) = 1$. Hence $\d(G) \le \max_{ H \in \mathcal{H}_s}\d(H)$ and then $\pi_{\Q}(F)\le \max_{ H \in \mathcal{H}_s}\d(H)$. 

This bound in general is very poor, for $F=C_4$ and $s\in\{2,3,4\}$ it gives that $\pi_{\Q}(F) \le 3/4$. 
It is because this bound only considers $\d(H)$. It does not use other structural properties of graphs in $\mathcal{H}_s$. 
Razborov's flag algebra method allows us to make use of more information about $\mathcal{H}_s$ 
and hence it gives a much better bound. 
Indeed, our results are obtained with $s=3$.

Let $H$ be a {\cg}, we call an injective map $\theta:
[m]\rightarrow V(H)$ a \emph{type map to $H$} if every vertex
$v \in V(H) \setminus Im(\theta)$ satisfies $v \notin Q(Im(\theta))$. 
A \emph{flag} $(H,\theta)$ is $H$ together with a type map $\theta$. If $\theta$ 
is also bijective, then we call the flag a \emph{type}. We can think of $\theta$ as a labeling. If $m=0$, then no vertex is labeled, and we use $0$ to denote such type.
%\comment{PH: need to prove that a feasible map maps a flag to a flag, i.e vertices outside $Q(Im(\theta))$ are mapped to vertices outside $Q(Im(f\cdot\theta))$, or say it is easy to see}
%It is not hard to see from Lemma~\ref{Dmap} that a feasible map maps a flag to a flag. %%%%% AGGGGGG
%so a type is a spanning subgraph of $\Q_r$ together with a $r$-type map.
Let $F_1=(H,\theta)$ be a flag. 
%let $V(F_1) = V(H), D(F_1)=D(H),d(F_1)=d(H)$ and its \emph{dimension} $\dim(F_1)$ be $\dim(H)$.
We say $F_1$ is \emph{$F$-free} if $H$ is $F$-free. 
We say $F_1$ is a \emph{$\sigma$-flag} if $(Im(\theta),\theta) \simeq \sigma$. 
See Figure~\ref{fig-okflag} for examples.
Let $H_1,H_2$ be two {\cg}s. We call two flags $F_1 = (H_1,\theta_1)$ and $F_2=(H_2,\theta_2)$
\emph{isomorphic} (denoted by $F_1 \simeq F_2$) if there
exists a feasible isomorphism $f:V(H_1)\rightarrow V(H_2)$ satisfying $f\cdot\theta_1 = \theta_2$. 
%We call such $f$ a \emph{flag isomorphism} from $F_1$ to $F_2$. 
%If it is from $F_1$ to itself, then we call it a \emph{flag automorphism}.
%\comment{JB: Feasible was not defined on flags.}

\begin{figure}[htbp]
\begin{center}
\def\e{1.3}
\def\lw{1.1pt}
\def\sh{0.18}

\def\type
{
\draw[fill=gray!30,rounded corners=4pt]
(-\sh,0.5*\e) -- ++(0,0.5*\e+\sh) -- ++(\e+2*\sh, 0) -- ++(0,-2*\sh) 
-- ++(-\e,0) -- ++(0,-\e) -- ++(-2*\sh,0) -- cycle
;
\draw[dotted]
(0,0) 
 node[insep, label={left:3}] {}
-- ++(\e,0) -- ++(0,\e)
(0,0) -- ++(0,\e)
;
\draw[line width=\lw]
(0,\e)  node[insep, label={left:1}] {} 
-- ++(\e,0)  node[insep, label={right:2}] {}
;
}

\def\bad
{
\draw[line width=4pt, color=gray!20, dashed]
(0+0.1,0-0.5) -- (1.5*\e-0.1,1.5*\e+0.5)
(1.5*\e-0.1,0-0.5) -- (0+0.1,1.5*\e+0.5)
;
}

\def\cube
{
\draw[dotted]
\foreach \x in {0,1}
{
(0.5*\x*\e,0.5*\x*\e)  node[noin] (a1\x) {} 
--  ++(\e,0) node[noin] (a2\x) {}
--  ++(0,\e) node[noin] (a3\x) {}
--  ++(-\e,0) node[noin] (a4\x) {}
-- (a1\x)
}
\foreach \y in {1,2,3,4}
{
(a\y0) -- (a\y1)
}
(a41) node[insep] {}
;
\draw[line width=\lw]
(a40)--(a41)
;
}

\def\confA{
\type
\cube
\draw[line width=\lw]
(a41)--(a31)--(a30)
(a31) node[insep] {}
;
\draw (0.5*\e,-0.6) node {$F_1$};
}

\def\confB{
%\bad
\type
\cube
\draw[line width=\lw]
(a41)--(a31)--(a30)
(a21)--(a31)
(a31) node[insep] {}
(a21) node[insep] {}
;
\draw (0.5*\e,-0.6) node {$F_2$};
}

\def\confC{
\type
\cube
\draw[line width=\lw]
;
\draw (0.5*\e,-0.6) node {$F_1$};
}

\def\confD{
\bad
\type
\cube
\draw[line width=\lw]
(a10)--(a20)
(a20) node[insep] {}
;
\draw (0.5*\e,-0.6) node {$F_3$};
}

\def\confType{
\type
\draw (0.5*\e,-0.6) node {$\sigma$};
}

\def\confE{
\draw[fill=gray!30,rounded corners=4pt]

(-\sh,\e-\sh) rectangle (\e+\sh,\e+\sh)
;
\draw[dotted]
(0,0) 
 node[insep] {}
-- ++(\e,0) -- ++(0,\e)
(0,0) -- ++(0,\e)
;
\draw[line width=\lw]
(0,\e)  node[insep, label={left:1}] {} 
-- ++(\e,0)  node[insep, label={right:2}] {}
;
\cube
\draw[line width=\lw]
(a10)--(a20)
(a20) node[insep] {}
;
\draw (0.5*\e,-0.6) node {$F_4$};
}

\begin{tikzpicture}
[insep/.style={inner sep=1.7pt, outer sep=0pt, circle, fill}, 
noin/.style={inner sep=0pt, outer sep=0pt, circle, fill},
list/.style={inner sep=0.0pt, outer sep=0pt, rectangle,fill=white},
precol/.style={inner sep=1.8pt, outer sep=0pt, circle, fill},
three/.style={inner sep=1.5pt, outer sep=0pt, regular polygon,regular polygon sides=3, draw,fill=white},
four/.style={inner sep=1.8pt, outer sep=0pt, regular polygon,regular polygon sides=4, draw,fill=white},
five/.style={inner sep=1.8pt, outer sep=0pt, regular polygon,regular polygon sides=5, draw,fill=white}
]

\begin{scope}[xshift=0cm, yshift = 0cm] 
%\confA 
\end{scope}

\begin{scope}[xshift=3.5cm, yshift = 0cm] 
\confC
\end{scope}

\begin{scope}[xshift=7cm, yshift = 0cm] 
\confB
\end{scope}

\begin{scope}[xshift=10.5cm, yshift = 0cm] 
\confD
\end{scope}

\begin{scope}[xshift=14cm, yshift = 0cm] 
\confE
\end{scope}

\begin{scope}[xshift=0.25cm, yshift = 0cm] 
\confType
\end{scope}

\end{tikzpicture}
\end{center}
\caption{$\sigma$ is a type, $F_1$ and $F_2$ are $\sigma$-flags but $F_3$ is not a flag. It contains an unlabeled vertex in $Q(Im(\theta))$. $F_4$ is a flag but not a $\sigma$-flag as the labeled vertices do not induce $\sigma$.}
\label{fig-okflag}
\end{figure}
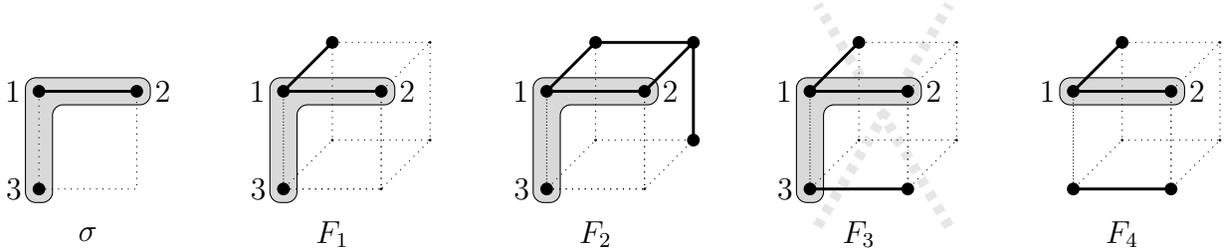

Let $\sigma$ be a type of dimension $r$. Let $G$ be a (large) 
$F$-free spanning subgraph of $\Q_n$, so $\dim(G) = n$. We say a type 
map $\theta$ to $G$ is a \emph{$\sigma$-type map} if there exists a feasible
bijection $f:Im(\theta) \rightarrow V(\sigma)$. Let $\Theta$ be the set of all $\sigma$-type maps $\theta$ to $G$.
Let $\F_k^{\sigma}$ be the set of all $F$-free $\sigma$-flags of dimension $k$.
Given a $\sigma$-flag $F_1=(H_1,\theta_1)\in\F^{\sigma}_k$ and a map $\theta\in \Theta$, 
we define $p(F_1,\theta;G)$ to be the probability that a feasible map $f:V(H_1)\rightarrow V(G)$ chosen uniformly at random subject to 
$f\cdot \theta_1 = \theta$
%is a flag isomorphism from $F_1$ to $(G[Im(f)],\theta)$
satisfies $(G[Im(f)],\theta)\simeq F_1$.
Note that if $(Im(\theta),\theta)\not\simeq \sigma$, then $p(F_1,\theta;G) = 0$. 
Given two $\sigma$-flags $F_1=(H_1,\theta_1)\in \F^{\sigma}_{k_1}$ and $F_2=(H_2,\theta_2)\in \F^{\sigma}_{k_2}$, 
for $\theta\in \Theta$, we define $p(F_1,F_2,\theta;G)$ to be the probability that 
if we choose two feasible maps $f_1:V(H_1)\rightarrow V(G)$ and $f_2:V(H_2)\rightarrow V(G)$ uniformly and independently at random  
subject to $f_1\cdot \theta_1 = \theta, f_2\cdot \theta_2 = \theta $ and $D(Im(f_1)) \cap D(Im(f_2)) = D(Im(\theta))$,
then $$(G[Im(f_1)],\theta) \simeq F_1 \text{ and } (G[Im(f_2)],\theta) \simeq F_2.$$ 
%then $f_1$ is a flag isomorphism from $F_1$ to $(G[Im(f_1)],\theta)$ and $f_2$ is a flag isomorphism from $F_2$ to $(G[Im(f_2)],\theta)$.

Note that $p(F_1,F_2,\theta;G)$ makes sense only when $n\ge k_1+k_2-r $ %(recall that $r=\dim(\sigma)$) 
since $D\left(Im(f_1) \cup Im(f_2)\right) = D(Im(f_1)) \cup D(Im(f_2))$ must be a subset of $D(V(G))$.
When comparing $p(F_1,F_2,\theta;G)$ with $p(F_1,\theta;G)p(F_2,\theta;G)$, we see
that the only difference between these two probabilities is that in 
$p(F_1,\theta;G)p(F_2,\theta;G)$ we ask only for 
\begin{equation}
f_1\cdot \theta_1 = \theta \textrm{ and } f_2\cdot \theta_2 = \theta
 \label{eq:pp}
\end{equation}
where $f_1,f_2$ are two randomly chosen feasible maps, 
while in $p(F_1,F_2,\theta;G)$ 
we ask not only for \eqref{eq:pp} but also for
\begin{equation} 
D(Im(\theta)) = D(Im(f_1)) \cap D(Im(f_2)).
\label{eq:onlyp}
\end{equation} 
When $n$ is very large, intuitively, if \eqref{eq:pp} holds, 
then with high probability \eqref{eq:onlyp} also holds, and then the difference between
these two probabilities is negligible. This following lemma states it formally.
It is similar to Lemma 2.1 in~\cite{Baber:2011}, which is a special case of Lemma 2.3 in~\cite{Razborov:2007}.

\begin{lemma} \label{lem:productdensity}
For any $F_1=(H_1,\theta_1)\in \F^{\sigma}_{k_1}, F_2=(H_2,\theta_2)\in \F^{\sigma}_{k_2}$, $\theta\in \Theta$, and $G$ being a spanning subgraph of $\Q_n$ it holds that
\[
p(F_1,\theta;G)p(F_2,\theta;G) = p(F_1,F_2,\theta;G) + o(1)
\]
where the $o(1)$ term tends to $0$ as $n$ tends to infinity.
\end{lemma}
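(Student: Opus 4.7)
The plan is to reformulate both sides as probabilities over essentially the same combinatorial sample space---pairs of feasible extensions of $\theta$---and then argue that the extra constraint $D(Im(f_1))\cap D(Im(f_2))=D(Im(\theta))$ carves out a $(1-o(1))$-fraction of that space.

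First, I would use Lemma~\ref{Dmap} to parameterize feasible extensions of $\theta$. Writing $r=\dim(\sigma)$ and $k_i=\dim(H_i)$, and letting $\phi_\theta:D(Im(\theta_i))\to D(Im(\theta))\subseteq[n]$ denote the coordinate injection forced by $\theta$ (which is the same for $i=1,2$), Lemma~\ref{Dmap} implies that a feasible $f_i:V(H_i)\to V(G)$ with $f_i\cdot\theta_i=\theta$ is uniquely determined by an injection $\phi_i:D(V(H_i))\to[n]$ extending $\phi_\theta$. Conversely, since $V(G)=\{0,1\}^n$, every such $\phi_i$ realizes a genuine feasible map: fixing any labeled vertex $v_0$, the rule ``flip in $G$ the coordinates $\phi_i(D(v_0,w))$ to reach the image of $w$'' defines a Hamming-distance-preserving $\tilde f_i$ on $Q(V(H_i))$ with image in $\{0,1\}^n=V(G)$. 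Hence feasible extensions on $V(H_i)$ biject with such $\phi_i$, and there are $N_i=(n-r)(n-r-1)\cdots(n-k_i+1)$ of them.

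Second, let $\mathcal{M}$ be the set of all pairs $(f_1,f_2)$ of feasible extensions of $\theta$, and $\mathcal{M}^*\subseteq\mathcal{M}$ those additionally satisfying $D(Im(f_1))\cap D(Im(f_2))=D(Im(\theta))$. Let $X$ (respectively $Y$) be the number of pairs in $\mathcal{M}$ (respectively $\mathcal{M}^*$) for which $(G[Im(f_j)],\theta)\simeq F_j$ for $j=1,2$. By the independence of the two uniform draws, $p(F_1,\theta;G)\,p(F_2,\theta;G)=X/|\mathcal{M}|$, and $p(F_1,F_2,\theta;G)=Y/|\mathcal{M}^*|$ by definition. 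Since $Y\le X\le Y+(|\mathcal{M}|-|\mathcal{M}^*|)$, a short manipulation gives
\[
\Bigl|\frac{X}{|\mathcal{M}|}-\frac{Y}{|\mathcal{M}^*|}\Bigr|\le \frac{|\mathcal{M}|}{|\mathcal{M}^*|}-1,
\]
reducing the lemma to showing $|\mathcal{M}^*|/|\mathcal{M}|\to 1$ as $n\to\infty$.

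Finally, I would compute this ratio directly from the parameterization: fixing $\phi_1$ first, the $\phi_2$'s contributing to $\mathcal{M}^*$ are exactly the injections that embed the $k_2-r$ ``new'' coordinates of $V(H_2)$ into the $n-k_1$ coordinates of $[n]\setminus Im(\phi_1)$. After cancellation the ratio telescopes to
\[
\frac{|\mathcal{M}^*|}{|\mathcal{M}|}=\prod_{i=0}^{k_1-r-1}\frac{n-k_2-i}{n-r-i}=1-O(1/n),
\]
which tends to $1$. The only delicate step is the bijection established above---in particular, the spanning hypothesis $V(G)=\{0,1\}^n$ is essential for every coordinate injection to correspond to a genuine feasible map---after which the remainder is routine asymptotic counting.
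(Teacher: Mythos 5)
Your approach is the same as the paper's: use Lemma~\ref{Dmap} to parameterize feasible extensions of $\theta$ by coordinate injections, reduce the lemma to showing that the constraint $D(Im(f_1))\cap D(Im(f_2))=D(Im(\theta))$ holds with probability $1-o(1)$, and compute that probability as a ratio of counts. Your counting inequality $|X/|\mathcal{M}|-Y/|\mathcal{M}^*||\le |\mathcal{M}|/|\mathcal{M}^*|-1$ and the paper's conditional-probability bound $|P(A\mid B)P(B)-P(A)|\le P(\overline B)$ are two phrasings of the same estimate, and your product $\prod_{i=0}^{k_1-r-1}\frac{n-k_2-i}{n-r-i}$ agrees with the paper's $\binom{n-k_1}{k_2-r}/\binom{n-r}{k_2-r}$.

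There is, however, a real gap in the middle step: your claim that ``feasible extensions on $V(H_i)$ biject with such $\phi_i$'' is false. Lemma~\ref{Dmap} only says that each $\phi_i$ (together with a base point) determines at most one $f_i$; it does not say the correspondence is injective. Distinct injections that agree on every set $D(u,v)$ (as a set, not pointwise) produce the same $f_i$ --- for instance, if $u,v\in V(H_i)$ with $d(u,v)=2$ and the two relevant coordinates of $\phi_i$ are swapped, the image of $u$ is unchanged. The paper explicitly acknowledges this (``Note that different choices of $\phi_i$ may give the same $f_i$'') and introduces the multiplicity factor $M_i$ to account for it. Your final ratio survives only because the event $B$ depends solely on the coordinate image sets $Im(\phi_i)$, and the preimage multiplicity is the same constant for every choice of image set, so it cancels from numerator and denominator; this needs to be said, not assumed via a nonexistent bijection. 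You also never treat the case $\sigma=0$: there is then no labeled base vertex $v_0$ to pin $f_i$, and each $\phi_i$ corresponds to $2^n$ maps, which the paper handles separately. Again the ratio is unaffected, but your argument as written does not cover it.
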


%\begin{lemma}
%For the above $F_1,F_2\in \mathcal{F}^{\sigma}$, $G$ a spanning subgraph of $\Q_n$, and $\theta\in \Theta$,
%$$p(F_1,\theta; G)p(F_2,\theta; G)=p(F_1,F_2,\theta;G)+o(1),$$
%where the $o(1)$ tends to 0 as $|V(G)|$ tends to infinity.
%\end{lemma}

\begin{proof}
Choose two independent feasible maps $f_1:V(H_1)\rightarrow V(G)$ and $f_2:V(H_2)\rightarrow V(G)$ 
uniformly at random subject to $f_1\cdot \theta_1 = \theta$ and $f_2\cdot \theta_2 = \theta$. 
For such choices of $f_1$ and $f_2$, let $A$ be the event
\begin{equation*}
(G[Im(f_1)],\theta)\simeq F_1 \text{ and } (G[Im(f_2)],\theta) \simeq F_2 ,
\end{equation*}
%``$f_1$ is a flag isomorphism from $F_1$ to $(G[Im(f_1)],\theta)$ and $f_2$ is a flag isomorphism from $F_2$ to $(G[Im(f_2)],\theta)$''
 and $B$ be the event 
 \begin{equation*}
D(Im(f_1)) \cap D(Im(f_2)) = D(Im(\theta)) . 
\end{equation*}

We have $p(F_1,\theta; G)p(F_2,\theta; G)=P(A)$ and $p(F_1,F_2,\theta;G)=P(A|B)$. 
Using that for any $A$ and $B$, it holds that
$$P(A|B)P(B)=P(A\cap B)\le P(A)\le P(A\cap B)+P(\overline{B}), $$ 
we have $|P(A|B)P(B)-P(A)|\le P(\overline{B}).$ 
Hence it suffices to show $P(B)\ge 1-o(1)$. Note that $P(B)$ depends on $V(H_1), V(H_2), V(G)$
 but not on the edges of these graphs.
 
For $i=1,2$, let $\phi_i$ be the $\phi$ in Lemma~\ref{Dmap} for $f_i$.
We compute $P(B)$ by counting possible choices of $\phi_i$ instead of counting $f_i$'s directly.
We first consider the case that the type $\sigma\ne 0$, i.e., some vertex is labeled. 
%We know $f_i(\theta_i(1)) = \theta(1)$, so each choice of $\phi_i$ will give a $f_i$,  
%and $f_i\cdot\theta_i$ is determined by $f_i(\theta_i(1))$ and $\phi_i|_{D(Im
%(\theta_i))}$. Then let $T_i$ be the number of $\phi_i|_{D(Im(\theta_i))}$'s such that 
%$f_i\cdot \theta_i = \theta$ given $f_i(\theta_i(1)) = \theta(1)$. Fix one of these $T_i$ choices, 
From $f_i\cdot \theta_i = \theta$ we know that $\phi_i(D(Im(\theta_i)))=D(Im(\theta))$, so we next need to look at $\phi_i$ on $D(V(H_i))\setminus D(Im(\theta_i))$.
Recall that $d(Im(\theta)) = r$, 
hence there are still $k_i - r$ coordinates to be chosen from $[n] \setminus D(Im(\theta))$.

We know $f_i(\theta_i(1)) = \theta(1)$, so each $\phi_i$ gives one feasible 
map $f_i$. 
%since $V(G)$ always contains $Im(f_i)$.
Note that different choices of $\phi_i$ may give the same $f_i$.
%\comment{PH: may give an example here.} 
Let $M_i$ be the number of feasible maps $f_i': V(H_i)\rightarrow Q\left( V(H_i) \right)$ satisfying $f_i'\cdot\theta_i = \theta_i$.  
Observe that $M_i$ is also the number of $f_i$'s for each choice of $(k_i -r)$ coordinates from $[n] \setminus D(Im(\theta))$ %under $\phi_i$.
%$\phi_i(D(H_i)\setminus D(Im(\theta_i)))$ 
given that $f_i\cdot \theta_i = \theta$. 
%\comment{PH: is it clear that ``each choice of $\phi_i(D(H_i)\setminus D(Im(\theta_i)))$'' is different from ``each choice of $\phi_i$ on ${D(H_i)\setminus D(Im(\theta_i))}$''}
%Note that $M_i$ depends only on $V(H_i)$ $\phi_i$. Hence it is same for all $\phi_i$.
%$\phi_i(D(F_i)\setminus D(Im(\theta_i)))$. 
%\comment{PH: This may be wrong because I am not in good shape.}
Note that good choices for the event $B$ are choosing coordinates for $\phi_1(D(V(H_1))\setminus D(Im(\theta_1)))$ and $\phi_2(D(V(H_2))\setminus D(Im(\theta_2)))$ that are disjoint.
%while in all choices they may overlap. 
So we can compute that
%Then we have (recall that $r=dim(\sigma)$)
$$P(B) = \frac{{n-r\choose k_1-r}M_1{n-k_1\choose k_2-r}M_2}{{n-r\choose k_1-r}M_1{n-r\choose k_2-r}M_2}=1-o(1).$$
%Indeed, there are in total ${n\choose r-t}^2M_1M_2$ ways to choose $r-t$ other dimensions to embed the rest of $F_1$ and $F_2$; and there are at least ${n-t\choose r-t}M_1$ ways to embed $F_1$ in some other $r-t$ dimensions and then there are at least ${n-r\choose r-t}M_2$ ways to embed $F_2$.

For the case $\sigma = 0$, each choice of $\phi_i$ will give $2^{n}$ different $f_i$'s, so we have 
$$P(B) = \frac{{n\choose k_1}M_12^{n}{n-k_1\choose k_2}M_22^{n}}{{n\choose k_1}M_12^{n}{n\choose k_2}M_22^{n}}=1-o(1).$$

\end{proof}

Now we can use this version of the flag algebra method to compute $\ex_{\Q}(F)$. This is the same as in \cite{Baber:2011}. 
We suggest the reader to start reading the next section in parallel with the following 
text as the entire next section can be viewed as an example.

Fix a type $\sigma \ne 0$. Averaging over a uniformly and randomly chosen $\theta\in\Theta$ we have
\begin{equation}
\E_{\theta\in \Theta}[p(F_1,\theta;G)p(F_2,\theta;G)] = \E_{\theta\in \Theta}[p(F_1,F_2,\theta;G)] + o(1).
\label{eq:prob_estimate}
\end{equation}

Pick $s\ge k_1+k_2-r$. For $H\in \mathcal{H}_s$, let $\Theta_H$ be the set of all $\sigma$-type maps to $H$. Then

\begin{equation}
 \E_{\theta \in \Theta}[p(F_1,F_2,\theta;G)]= \sum_{H \in \mathcal{H}_s}\E_{\theta \in \Theta_H}[p(F_1,F_2,\theta;H)]p(H,G).
 \label{eq:prob_average}
\end{equation}

We pick $\sigma \ne 0$ simply because if $\sigma = 0$, then \eqref{eq:prob_average} does not hold.
Let $\F =\left\{ F_1,\dots,F_{\ell} \right\}\subseteq \F^{\sigma}_k$ be satisfying 
\begin{equation}
s\ge 2k - r
 \label{eq:s}
\end{equation}
and let $M=(m_{ij})$ be a positive semidefinite $\ell$-by-$\ell$ matrix. 
For $\theta\in \Theta$ define $\mathbf{p}_{\theta}=\left\{ p(F_1,\theta;G),\dots,p(F_{\ell},\theta;G) \right\}$. 
Using \eqref{eq:prob_estimate} and \eqref{eq:prob_average}, we have 
\begin{equation}
0 \le \E_{\theta\in\Theta}[\mathbf{p}_{\theta}M\mathbf{p}^T_{\theta}] = 
\sum_{1\le i,j\le \ell} \sum_{H\in \mathcal{H}_s} m_{ij} \E_{\theta\in\Theta_H}[p(F_i,F_j,\theta;H)]p(H,G) + o(1).
 \label{eq:positive}
\end{equation}
For $H\in \mathcal{H}_s$ we define $c_H(\sigma,\F,M)$ to be the coefficient of $p(H,G)$ in $\eqref{eq:positive}$ i.e.,
\begin{equation*}
c_H(\sigma,\F,M) = \sum_{1\le i,j\le \ell} m_{ij} \E_{\theta\in\Theta_H}[p(F_i,F_j,\theta;H)].
 \label{eq:cH}
\end{equation*}

Then we can rewrite $\eqref{eq:positive}$ as
\begin{equation*}
0 \le \sum_{H\in \mathcal{H}_s} c_H(\sigma,\F,M) p(H,G) + o(1).
 \label{eq:positive_coeff}
\end{equation*}

Fix $G$ and $\mathcal{H}_s$, suppose we have $t$ choices of $(\sigma_i,\F_i,M_i)$, 
where each $\sigma_i\ne 0$ is a type of dimension $r_i$,
each $\F_i$ is a subset of $\F_{k_i}^{\sigma_i}$ satisfying $s\ge 2k_i -r_i$,
and each $M_i$ is a positive semidefinite matrix of dimension $|\F_i|$.
%$|\F^{\sigma_i}_{k_i}|$.
 Then for $H\in\mathcal{H}_s$ we have 
\[
%0 \le \sum_{H\in \mathcal{H}_s}\left( \sum_{i=1}^t c_H(\sigma_i,\F^{\sigma_i}_{k_i},M_i) \right)p(H,G) + o(1).
0 \le \sum_{H\in \mathcal{H}_s}\left( \sum_{i=1}^t c_H(\sigma_i,\F_i,M_i) \right)p(H,G) + o(1).
\]

Define $c_H = \sum_{i=1}^t c_H(\sigma_i, \F_i
%\F^{\sigma_i}_{k_i}
,M_i) $, then we have $0 \le \sum_{H\in \mathcal{H}_s}c_H p(H,G) + o(1)$. Together with \eqref{eq:density}, we have
\[
\d(G)\le \sum_{H\in \mathcal{H}_s}(\d(H)+c_H )p(H,G) + o(1).
\]

Thus $\d(G)\le \max_{H\in \mathcal{H}_s}(\d(H)+c_H )+o(1)$ and therefore $\pi_{\Q}(F) \le \max_{H\in \mathcal{H}_s}(\d(H)+c_H )$.

\section{\texorpdfstring{Example for $\Q_2$ }{Example for Q2}}\label{sec:example}
 
%Purpose of this section is to help the reader to get used to the
%terminology and the whole technique. Moreover, the proof of Theorem~\ref{tim:main}
%cannot be completely presented as the set of graphs we consider is too
%large to fit in the paper. 

In this section we apply the flag algebra method with $F=C_4$ and $\cH_2$.
We obtain a weaker bound $\pi_{\Q}(C_4) \leq 2/3$ than in Theorem~\ref{thm:main}.
On the other hand, it allows us to present the proof with all the details and
hopefully it makes the reader more comfortable while reading the proofs of Theorems~\ref{thm:main} and \ref{thm:C6}
as the method is the same.
 
%We now look at a simple example, we got our result by applying this method on $\Q_3$. If we run it on $\Q_2$, we can get an upper bound $2/3$.

We consider only one type, a single labelled vertex, so its dimension is zero.
As flags $\F=\{F_0, F_1\}$ we use both possible flags on two vertices with
one labelled vertex and containing $0$ and $1$ edges, respectively. 
So they both have dimension one.
See Figure~\ref{fig-F} for $F_0$ and $F_1$.

\begin{figure}[ht]
\begin{center}
\def\e{1}
\def\lab{-1}
\def\lw{1.1pt}

\def\H
{
\draw[fill=gray!30]
(0,0)  circle (6pt)
;
\draw
(0,0)  node[insep] (a) {} 
 ++(\e,0) node[insep] (b) {}
(a) ++(0,-0.5) node {1}
;

}

\def\confA{
\H
;
\draw
(0.5*\e,\lab) node {$F_0$}
;
\draw[dotted]
(a)--(b);
}

\def\confB{
\H
\draw[line width=\lw]
(a) -- (b)
;
\draw
(0.5*\e,\lab) node {$F_1$}
;
}

\begin{tikzpicture}
[insep/.style={inner sep=1.7pt, outer sep=0pt, circle, fill}, 
noin/.style={inner sep=0pt, outer sep=0pt, circle, fill},
list/.style={inner sep=0.0pt, outer sep=0pt, rectangle,fill=white},
precol/.style={inner sep=1.8pt, outer sep=0pt, circle, fill},
three/.style={inner sep=1.5pt, outer sep=0pt, regular polygon,regular polygon sides=3, draw,fill=white},
four/.style={inner sep=1.8pt, outer sep=0pt, regular polygon,regular polygon sides=4, draw,fill=white},
five/.style={inner sep=1.8pt, outer sep=0pt, regular polygon,regular polygon sides=5, draw,fill=white}
]

\begin{scope}[xshift=0cm, yshift = 0cm] 
\confA 
\end{scope}
\begin{scope}[xshift=3cm, yshift = 0cm] 
\confB
\end{scope}

\end{tikzpicture}
\end{center}
\caption{Two flags of dimension one with one labeled vertex.}
\label{fig-F}
\end{figure}
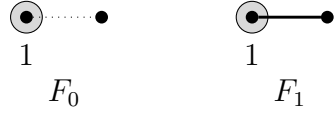

Recall that $\mathcal{H}_2$ is the set of all $C_4$-free subgraphs of $\Q_2$. 
See Figure~\ref{fig-H} for the list of all five of them.
Note that the variables corresponding to the previous section are $r =  0, k = 1, s = 2$ and $t = 1$.
%\comment{PH: even I cannot recall what is $t$.}
We can use $\mathcal{H}_2$ because \eqref{eq:s} holds.

\begin{figure}[ht]
\begin{center}
\def\e{1}
\def\lab{-1}
\def\lw{1.1pt}

\def\H
{
\draw[dotted]
(0,0)  node[insep] (a) {} --
 ++(\e,0) node[insep] (b) {} --
 ++(0,\e) node[insep] (c) {} --
 ++(-\e,0) node[insep] (d) {} -- (a)
;
}

\def\confA{
\H
;
\draw
(0.5*\e,\lab) node {$H_0$}
;
}

\def\confB{
\H
\draw[line width=\lw]
(a) -- (b)
;
\draw
(0.5*\e,\lab) node {$H_1$}
;
}

\def\confC{
\H
\draw[line width=\lw]
(a) -- (b) -- (c)
;
\draw
(0.5*\e,\lab) node {$H_3$}
;
}
\def\confD{
\H
\draw[line width=\lw]
(a) -- (b) (c) -- (d)
;
\draw
(0.5*\e,\lab) node {$H_{2}$}
;
}
\def\confE{
\H
\draw[line width=\lw]
(a) -- (b) -- (c) -- (d) 
;
\draw
(0.5*\e,\lab) node {$H_4$}
;
}

\begin{tikzpicture}
[insep/.style={inner sep=1.7pt, outer sep=0pt, circle, fill}, 
noin/.style={inner sep=0pt, outer sep=0pt, circle, fill},
list/.style={inner sep=0.0pt, outer sep=0pt, rectangle,fill=white},
precol/.style={inner sep=1.8pt, outer sep=0pt, circle, fill},
three/.style={inner sep=1.5pt, outer sep=0pt, regular polygon,regular polygon sides=3, draw,fill=white},
four/.style={inner sep=1.8pt, outer sep=0pt, regular polygon,regular polygon sides=4, draw,fill=white},
five/.style={inner sep=1.8pt, outer sep=0pt, regular polygon,regular polygon sides=5, draw,fill=white}
]

\begin{scope}[xshift=0cm, yshift = 0cm] 
\confA 
\end{scope}
\begin{scope}[xshift=2.7cm, yshift = 0cm] 
\confB
\end{scope}
\begin{scope}[xshift=5.4cm, yshift = 0cm] 
\confD
\end{scope}
\begin{scope}[xshift=8.1cm, yshift = 0cm] 
\confC
\end{scope}
\begin{scope}[xshift=10.8cm, yshift = 0cm] 
\confE
\end{scope}

\end{tikzpicture}
\end{center}
\caption{$C_4$-free spanning subgraphs of $\Q_2$.}
\label{fig-H}
\end{figure}
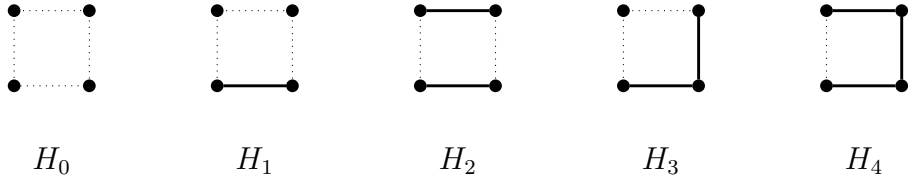

In order to calculate the coefficients $c_H$ we need to compute
$\E_{\theta\in \Theta}p(F_i,F_j,\theta,H)$ for all possible
$H \in \cH_2$ and $F_i,F_j \in \F$.
The values of $\E_{\theta\in \Theta}p(F_i,F_j,\theta,H)$ are given in Table~\ref{tab-E}.

\begin{table}[ht]
\begin{center}
    \begin{tabular}{ | c | c | c | c | c | c |}
    \hline
%    \backslashbox{$F$}{$H$} &0&1&2&$3$&4\\ \hline
     &$H_0$&$H_1$&$H_2$&$H_3$&$H_4$\\ \hline
    $F_0,F_0$&1&1/2&0&1/4&0\\ \hline
    $F_0,F_1$&0&1/4&1/2&1/4&1/4\\ \hline
    $F_1,F_1$&0&0&0&1/4&1/2\\ \hline
    \end{tabular}
\end{center}
\caption{ $\E_{\theta\in \Theta}p(F_i,F_j,\theta,H)$.}
\label{tab-E}
\end{table}

We show how to compute  $\E_{\theta\in \Theta}p(F_0,F_1,\theta,H_3)$ and leave
the verification of other entries in Table~\ref{tab-E} to the interested readers. 
In this case we need
to compute the probability that a uniformly and randomly chosen $\theta \in \Theta$
and two pairs of vertices with Hamming distance one $V_0,V_1 \subset V(H_3)$
chosen independently and uniformly at random with intersection $Im(\theta)$ induce flags $(H_3[V_0],\theta)$ and
$(H_3[V_1],\theta)$ that are isomorphic to $F_0$ and $F_1$, respectively.
By inspection of the cases, this happens only when $Im(\theta)$ is a vertex
of degree one and the other vertices of $V_0$ are $V_1$ are the vertices of degree
zero and two, respectively. So 2 out of 8 possibilities are satisfying the condition.

As $l = 2$, we want to choose a positive semidefinite $2\times2$ matrix $M$ used 
in \eqref{eq:positive}. In the general form
$$M =
\left( \begin{array}{cc}
m_{11} & m_{12} \\m_{21} & m_{22}
\end{array}\right ).$$
Note that $m_{12}  = m_{21}$ as $M$ must be symmetric. 
%\begin{bmatrix} a & b \\b & c\end{bmatrix}$. 
We can compute $c_H(\sigma,\F,M)$ by multiplying the vector $(m_{11}, 2m_{12}, m_{22})$ 
with the column corresponding to $H$ in Table~\ref{tab-E} for every $H \in \cH_2$. Note that $c_H(\sigma,\F,M)$ is the same as $c_H$ because $t=1$.
Together with densities we have
\begin{eqnarray*}
\d(H_0) + c_{H_0} &=& 0 + m_{11}\\
\d(H_1) + c_{H_1} &=& 1/4 + m_{11}/2 + m_{12}/2\\
\d(H_2) + c_{H_2} &=& 1/2 + m_{12}\\
\d(H_3) + c_{H_3} &=& 1/2 + m_{11}/4 + m_{12}/2 + m_{22}/4\\
\d(H_4) + c_{H_4} &=& 3/4 + m_{12}/2 + m_{22}/2.\\
\end{eqnarray*}
Recall that $\pi_{\Q}(C_4) \le \max_i (\d(H_i) + c_{H_i})$. So we want to minimize $\max_i (\d(H_i) + c_{H_i})$ over all positive semidefinite matrices.   
This can be expressed as a semidefinite program $(P)$ as follows:

$$
(P) 
\begin{cases}
\text{Minimize}\,\,\, v\,\\
\text{subject to}\,\,\,  v \geq \d(H_i) + c_{H_i}\,\,  \forall H_i\in \cH_2 \\
v \in \R, M\text{ is positive semidefinite.}
\end{cases}
$$

%\begin{align*}
%&\min v\\
%& v \ge c_{H_i},  i\in \{0,1,2,3,4\}\\
%& v \in \R, M\text{ is positive semidefinite.}
%\end{align*}

%Solving the problem we get that $Q=\begin{bmatrix} 2/3 & -1/3 \\ -1/3 & 1/6\end{bmatrix}$ is the optimal solution, and $\max_i c_{H_i} = 2/3$.
The optimal solution of $(P)$ is 
$$M^*=
\left( \begin{array}{cc}
2/3 & -1/3 \\ -1/3 & 1/6
\end{array}\right )$$
and it gives $\max_i (\d(H_i) + c_{H_i}) = 2/3$. Note that it is not necessary to use the optimal solution to get
an upper bound but any feasible solution gives an upper bound (of course, not as good the optimal solution). 
We use this observation later in order to fix rounding errors by CSDP solver.

\section{\texorpdfstring{Proof of Theorem~\ref{thm:main}}{Proof of Theorem 1}}\label{sec:proof}

The proof of Theorem~\ref{thm:main} goes along the same lines
as the proof in the previous section. It is just performed with
$\Q_3$ and with more flags.

Let $E_0,E_1 \subseteq \Q_1$ be cube graphs
with zero and one edge, respectively and let $\theta_i: [2] \rightarrow V(E_i)$
for $i \in \{0,1\}$. We consider two types $\sigma_0 = (E_0,\theta_0)$ and 
$\sigma_1 = (E_1,\theta_1)$ and flags of dimension two. 
Let $\F_0= \{F_0^0,\ldots,F_7^0\}$ be all flags in $\F^{\sigma_0}_2$ on $4$ vertices
and let $\F_1= \{F_0^1,\ldots,F_6^1\}$ be all flags in $\F^{\sigma_1}_2$ on $4$ vertices.
The flag of type $\sigma_1$ with four edges is not in $\F^{\sigma_1}_2$
since it is not $C_4$-free.
See Figure~\ref{fig-FQ2} for the list of flags.

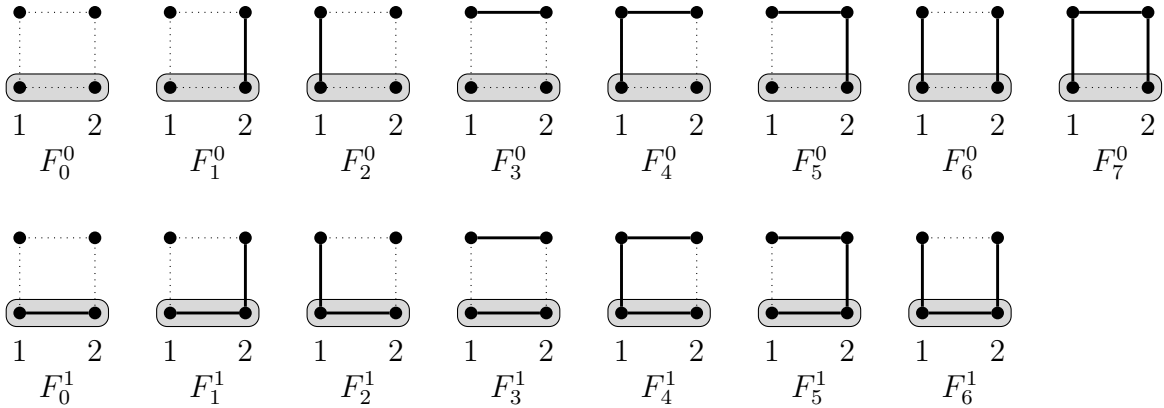
\begin{figure}[ht]
\begin{center}
\def\e{1}
\def\lab{-1}
\def\lw{1.1pt}
\def\msh{0.18}

\def\H
{
\draw[fill=gray!30, ,rounded corners=4pt]
(-\msh,-\msh) rectangle (\e+\msh, \msh)
%(0.5*\e,0) ellipse (0.7 and 0.2)
;
\draw[dotted]
(0,0)  node[insep] (a) {} 
-- ++(\e,0) node[insep] (b) {}
-- ++(0,\e) node[insep] (c) {}
--  ++(-\e,0) node[insep] (d) {}
--(a)
(a) ++(0,-0.5) node {1}
(b) ++(0,-0.5) node {2}
;
}

\def\confA{
\H
\draw[line width=\lw] (a) -- (b) ;
\draw (0.5*\e,\lab) node {$F_0^1$};
}

\def\confB{
\H
\draw[line width=\lw] (a) -- (b)--(c) ;
\draw (0.5*\e,\lab) node {$F_1^1$};
}

\def\confC{
\H
\draw[line width=\lw] (d)--(a) -- (b) ;
\draw (0.5*\e,\lab) node {$F_2^1$};
}

\def\confD{
\H
\draw[line width=\lw] (c)--(d)(a) -- (b) ;
\draw (0.5*\e,\lab) node {$F_3^1$};
}

\def\confE{
\H
\draw[line width=\lw] (c)--(d)--(a) -- (b) ;
\draw (0.5*\e,\lab) node {$F_4^1$};
}

\def\confF{
\H
\draw[line width=\lw] (a) -- (b)--(c)--(d);
\draw (0.5*\e,\lab) node {$F_5^1$};
}

\def\confG{
\H
\draw[line width=\lw] (d)--(a) -- (b)-- (c);
\draw (0.5*\e,\lab) node {$F_6^1$};
}

\def\confH{
\H
\draw[line width=\lw] (d)--(a)-- (b)-- (c)--(d);
\draw (0.5*\e,\lab) node {$F_7^1$};
}

\def\confAA{
\H
\draw[line width=\lw] (a)  (b) ;
\draw (0.5*\e,\lab) node {$F_0^0$};
}

\def\confBB{
\H
\draw[line width=\lw] (a)  (b)--(c) ;
\draw (0.5*\e,\lab) node {$F_1^0$};
}

\def\confCC{
\H
\draw[line width=\lw] (d)--(a)  (b) ;
\draw (0.5*\e,\lab) node {$F_2^0$};
}

\def\confDD{
\H
\draw[line width=\lw] (c)--(d)(a)  (b) ;
\draw (0.5*\e,\lab) node {$F_3^0$};
}

\def\confEE{
\H
\draw[line width=\lw] (c)--(d)--(a)  (b) ;
\draw (0.5*\e,\lab) node {$F_4^0$};
}

\def\confFF{
\H
\draw[line width=\lw] (a)  (b)--(c)--(d);
\draw (0.5*\e,\lab) node {$F_5^0$};
}

\def\confGG{
\H
\draw[line width=\lw] (d)--(a)  (b)-- (c);
\draw (0.5*\e,\lab) node {$F_6^0$};
}

\def\confHH{
\H
\draw[line width=\lw] (d)--(a)  (b)-- (c)--(d);
\draw (0.5*\e,\lab) node {$F_7^0$};
}

\begin{tikzpicture}
[insep/.style={inner sep=1.7pt, outer sep=0pt, circle, fill}, 
noin/.style={inner sep=0pt, outer sep=0pt, circle, fill},
list/.style={inner sep=0.0pt, outer sep=0pt, rectangle,fill=white},
precol/.style={inner sep=1.8pt, outer sep=0pt, circle, fill},
three/.style={inner sep=1.5pt, outer sep=0pt, regular polygon,regular polygon sides=3, draw,fill=white},
four/.style={inner sep=1.8pt, outer sep=0pt, regular polygon,regular polygon sides=4, draw,fill=white},
five/.style={inner sep=1.8pt, outer sep=0pt, regular polygon,regular polygon sides=5, draw,fill=white}
]

\begin{scope}[xshift=0cm, yshift = 0cm] 
\confA 
\end{scope}
\begin{scope}[xshift=2cm, yshift = 0cm] 
\confB
\end{scope}
\begin{scope}[xshift=4cm, yshift = 0cm] 
\confC
\end{scope}
\begin{scope}[xshift=6cm, yshift = 0cm] 
\confD
\end{scope}
\begin{scope}[xshift=8cm, yshift = 0cm] 
\confE
\end{scope}
\begin{scope}[xshift=10cm, yshift = 0cm] 
\confF
\end{scope}
\begin{scope}[xshift=12cm, yshift = 0cm] 
\confG
\end{scope}

\def\sh{3cm}

\begin{scope}[xshift=0cm, yshift = \sh] 
\confAA 
\end{scope}
\begin{scope}[xshift=2cm, yshift = \sh] 
\confBB
\end{scope}
\begin{scope}[xshift=4cm, yshift = \sh] 
\confCC
\end{scope}
\begin{scope}[xshift=6cm, yshift = \sh] 
\confDD
\end{scope}
\begin{scope}[xshift=8cm, yshift = \sh] 
\confEE
\end{scope}
\begin{scope}[xshift=10cm, yshift =\sh] 
\confFF
\end{scope}
\begin{scope}[xshift=12cm, yshift = \sh] 
\confGG
\end{scope}
\begin{scope}[xshift=14cm, yshift = \sh] 
\confHH
\end{scope}
\end{tikzpicture}
\end{center}
\caption{$\F_0$ is in the first row and $\F_1$ is in the second row. }
\label{fig-FQ2}
\end{figure}

Next we need to obtain $\cH_3$, the set of all $C_4$-free subgraphs
of $\Q_3$. We wrote two independent computer programs for generating 
the graphs and obtained a list of $99$ graphs which agrees with \cite{Thomason:2009} 
where the authors also obtained $99$ such graphs.

Our computer programs also calculated $\E_{\theta\in \Theta}p(F_i^k,F_j^k,\theta,H)$ for all possible
$H \in \cH_3$ and $F_i^k,F_j^k \in \F_k$
and produced a semidefinite program.

The resulting semidefinite program was solved by CSDP~\cite{Borchers:1999}.
Due to rounding, the resulting matrix $M^*$ may not be positive semidefinite. 
We used MATLAB to perturb the matrix to make sure that it is positive semidefinite and then 
we computed an upper bound $\pi_{\Q}(C_4) \leq 0.6068$.

\section{\texorpdfstring{Proof of Theorem~\ref{thm:C6}}{Proof of Theorem 2}}\label{sec:proofC6}

The proof of Theorem~\ref{thm:C6} is the same as the proof of Theorem~\ref{thm:main}.
We also considered both types of dimension one with two labeled vertices.
In this case we again considered all possible flags on four vertices.
See Figure~\ref{fig-FQ2C6} for the list of the flags.

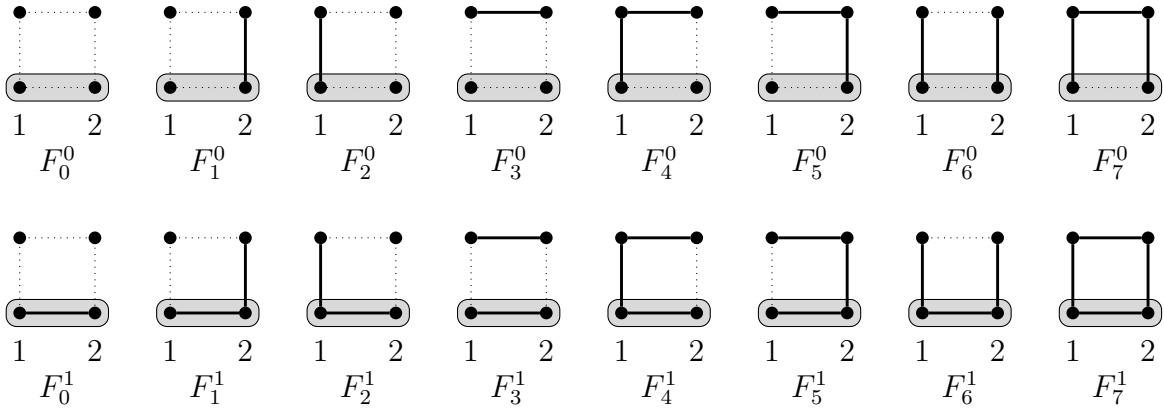
\begin{figure}[ht]
\begin{center}
\def\e{1}
\def\lab{-1}
\def\lw{1.1pt}
\def\msh{0.18}

\def\H
{
\draw[fill=gray!30, ,rounded corners=4pt]
(-\msh,-\msh) rectangle (\e+\msh, \msh)
%(0.5*\e,0) ellipse (0.7 and 0.2)
;
\draw[dotted]
(0,0)  node[insep] (a) {} 
-- ++(\e,0) node[insep] (b) {}
-- ++(0,\e) node[insep] (c) {}
--  ++(-\e,0) node[insep] (d) {}
--(a)
(a) ++(0,-0.5) node {1}
(b) ++(0,-0.5) node {2}
;
}

\def\confA{
\H
\draw[line width=\lw] (a) -- (b) ;
\draw (0.5*\e,\lab) node {$F_0^1$};
}

\def\confB{
\H
\draw[line width=\lw] (a) -- (b)--(c) ;
\draw (0.5*\e,\lab) node {$F_1^1$};
}

\def\confC{
\H
\draw[line width=\lw] (d)--(a) -- (b) ;
\draw (0.5*\e,\lab) node {$F_2^1$};
}

\def\confD{
\H
\draw[line width=\lw] (c)--(d)(a) -- (b) ;
\draw (0.5*\e,\lab) node {$F_3^1$};
}

\def\confE{
\H
\draw[line width=\lw] (c)--(d)--(a) -- (b) ;
\draw (0.5*\e,\lab) node {$F_4^1$};
}

\def\confF{
\H
\draw[line width=\lw] (a) -- (b)--(c)--(d);
\draw (0.5*\e,\lab) node {$F_5^1$};
}

\def\confG{
\H
\draw[line width=\lw] (d)--(a) -- (b)-- (c);
\draw (0.5*\e,\lab) node {$F_6^1$};
}

\def\confH{
\H
\draw[line width=\lw] (d)--(a)-- (b)-- (c)--(d);
\draw (0.5*\e,\lab) node {$F_7^1$};
}

\def\confAA{
\H
\draw[line width=\lw] (a)  (b) ;
\draw (0.5*\e,\lab) node {$F_0^0$};
}

\def\confBB{
\H
\draw[line width=\lw] (a)  (b)--(c) ;
\draw (0.5*\e,\lab) node {$F_1^0$};
}

\def\confCC{
\H
\draw[line width=\lw] (d)--(a)  (b) ;
\draw (0.5*\e,\lab) node {$F_2^0$};
}

\def\confDD{
\H
\draw[line width=\lw] (c)--(d)(a)  (b) ;
\draw (0.5*\e,\lab) node {$F_3^0$};
}

\def\confEE{
\H
\draw[line width=\lw] (c)--(d)--(a)  (b) ;
\draw (0.5*\e,\lab) node {$F_4^0$};
}

\def\confFF{
\H
\draw[line width=\lw] (a)  (b)--(c)--(d);
\draw (0.5*\e,\lab) node {$F_5^0$};
}

\def\confGG{
\H
\draw[line width=\lw] (d)--(a)  (b)-- (c);
\draw (0.5*\e,\lab) node {$F_6^0$};
}

\def\confHH{
\H
\draw[line width=\lw] (d)--(a)  (b)-- (c)--(d);
\draw (0.5*\e,\lab) node {$F_7^0$};
}

\begin{tikzpicture}
[insep/.style={inner sep=1.7pt, outer sep=0pt, circle, fill}, 
noin/.style={inner sep=0pt, outer sep=0pt, circle, fill},
list/.style={inner sep=0.0pt, outer sep=0pt, rectangle,fill=white},
precol/.style={inner sep=1.8pt, outer sep=0pt, circle, fill},
three/.style={inner sep=1.5pt, outer sep=0pt, regular polygon,regular polygon sides=3, draw,fill=white},
four/.style={inner sep=1.8pt, outer sep=0pt, regular polygon,regular polygon sides=4, draw,fill=white},
five/.style={inner sep=1.8pt, outer sep=0pt, regular polygon,regular polygon sides=5, draw,fill=white}
]

\begin{scope}[xshift=0cm, yshift = 0cm] 
\confA 
\end{scope}
\begin{scope}[xshift=2cm, yshift = 0cm] 
\confB
\end{scope}
\begin{scope}[xshift=4cm, yshift = 0cm] 
\confC
\end{scope}
\begin{scope}[xshift=6cm, yshift = 0cm] 
\confD
\end{scope}
\begin{scope}[xshift=8cm, yshift = 0cm] 
\confE
\end{scope}
\begin{scope}[xshift=10cm, yshift = 0cm] 
\confF
\end{scope}
\begin{scope}[xshift=12cm, yshift = 0cm] 
\confG
\end{scope}
\begin{scope}[xshift=14cm, yshift = 0cm] 
\confH
\end{scope}

\def\sh{3cm}

\begin{scope}[xshift=0cm, yshift = \sh] 
\confAA 
\end{scope}
\begin{scope}[xshift=2cm, yshift = \sh] 
\confBB
\end{scope}
\begin{scope}[xshift=4cm, yshift = \sh] 
\confCC
\end{scope}
\begin{scope}[xshift=6cm, yshift = \sh] 
\confDD
\end{scope}
\begin{scope}[xshift=8cm, yshift = \sh] 
\confEE
\end{scope}
\begin{scope}[xshift=10cm, yshift =\sh] 
\confFF
\end{scope}
\begin{scope}[xshift=12cm, yshift = \sh] 
\confGG
\end{scope}
\begin{scope}[xshift=14cm, yshift = \sh] 
\confHH
\end{scope}
\end{tikzpicture}
\end{center}
\caption{Flags used in the proof of Theorem~\ref{thm:C6}.}
\label{fig-FQ2C6}
\end{figure}

Next we need to obtain $\cH_3$, the set of all $C_6$-free subgraphs
of $\Q_3$. We wrote two independent computer programs for generating 
the graphs and obtained a list of 116 graphs. We again used CSDP
solver and after perturbation we obtained that 
$\pi_{\Q}(C_6) \leq 0.3755 $.

\section{Middle layers}
%It is possible to use Flag Algebras to attack a similar problem on boolean lattices instead of on hypercubes. For a family $F$ of subsets of $[n] = \{1, 2, \ldots, n\}$ ordered by inclusion, and a partially ordered set $P$, we say that $F$ is $P$-free if it does not contain a subposet isomorphic to $P$. Let $\ex(n, P)$ be the largest size of a $P$-free family of subsets of $[n]$. 
%Let $Q_2$ be the poset with distinct elements $a, b, c, d$ where $a < b, c < d$; i.e., the $2$-dimensional Boolean lattice.
%In \cite{Axenovich:2011}, Axenovich, Manske and Martin showed that $2N - o(N) \le \ex(n, Q_2 ) \le 2.283261N + o(N)$ where $N = \lfloor n/2\rfloor$ they also proved that the largest $Q_2$-free family of subsets of $[n]$ having at most three different sizes has at most $(3+\sqrt{2})N/2$ members. 
%%%%%For their latter result, using Flag Algebras and the same method, we can easily achieve the same bound $(3+\sqrt{2})N/2$ and (possibly) get better bound. Now we show how to get $(3+\sqrt{2})N/2$.
%Their latter result can be improved by Flag Algebras. We show how to achieve the same bound $(3+\sqrt{2})N/2$ that can be verified by hand. With help of computers we then improve the bound to $2.15XXXXX$.

%\begin{theorem}\label{thm-216}
%The largest $Q_2$-free family of subsets of $[n]$ having at most three different sizes has at most $2.15XXXXXN$ member
%where $N$ is the maximum number of subsets of the same size.
%\end{theorem}

This section describes the idea of proving Theorem~\ref{thm:middle}. We do not
give the entire proof as it is computer assisted. Instead, we show a proof of
a weaker result which goes along the same way as the proof of Theorem~\ref{thm:middle}.
Note that it is easy to see that it is sufficient to show the theorem only for the middle three layers and we are giving
an upper bound.
%\textbf{Write why is it sufficient to prove only for the middle layers?}

We start with describing the upper bound $(3+\sqrt{2})N/2$ using flag algebras.
We skip some technical details; namely stating and proving a lemma analogous to Lemma~\ref{lem:productdensity} for hypercubes.

Let $A_n, B_n, C_n$ be the family of subsets of $[n]$ having sizes $\lfloor n/2\rfloor-1, \lfloor n/2\rfloor$ and $\lfloor n/2\rfloor+1$ respectively. Let $M_n = A_n\cup B_n\cup C_n$, then $|M_n| = (3+o(1))N$. Given a subset $G_n$ of $M_n$, define
%Let $A_n, B_n, C_n$ be the family of subsets of $[n]$ having sizes $\lfloor n/2\rfloor-1, \lfloor n/2\rfloor$ and $\lfloor n/2\rfloor+1$ respectively. Let $M_n = A_n\cup B_n\cup C_n$, then $|M_n| = (1+o(1))N$. Given a subset $G_n$ of $M_n$, define
\[\d(G_n) = \frac{|G_n\cap A_n|}{|A_n|} + \frac{|G_n\cap B_n|}{|B_n|} + \frac{|G_n\cap C_n|}{|C_n|}.\]
In the following we view a family of subsets as its Hasse diagram. This allows us
to talk about subsets as vertices and edges for subsets that differ by exactly one element.
Let $\cH_n$ be the family of all $Q_2$-free subsets of $M_n$, then we can write the result in \cite{Axenovich:2011} as
$$\lim_{n\to\infty,G_n\in \cH_n} \d(G_n)\le (3+\sqrt{2})/2.$$
%We would look at $\cH_2$ and two flags to achieve this bounds. See Figure \ref{fig-middH} for $\cH_2$,
The same result can be achieved by considering $\cH_2$ (see Figure~\ref{fig-middH}), and two flags (see~Figure~\ref{fig-middF}). 
%Notice that a family is $Q_2$-free if and only if the corresponding graph is $C_4$-free.
\begin{figure}
\begin{center}
%\resizebox{\linewidth}{!}
{\def\e{0.6}
\def\c{ circle(4pt) }
\def\s{1.9}
\def\lab{-.6}

\def\confA{
\draw (0,0)   -- (\e,\e) -- (0,2*\e) -- (-\e,\e) -- (0,0) ;
\fill (0,0)  \c   (\e,\e) \c   (0,2*\e)  \c   (-\e,\e) \c ;
\filldraw[fill=white] (0,0)  \c   (\e,\e)   \c (0,2*\e)   \c  (-\e,\e)  \c;
\draw (0,\lab) node {$H_0$} ;
}

\def\confB{
\draw (0,0)   -- (\e,\e) -- (0,2*\e) -- (-\e,\e) -- (0,0) ;
\fill (0,0)  \c   (\e,\e) \c   (0,2*\e)  \c   (-\e,\e) \c ;
\filldraw[fill=white] (0,0)   \c  (\e,\e)  \c  (0,2*\e)     (-\e,\e)  \c;
\draw (0,\lab) node {$H_1$} ;
}

\def\confBB{
\draw (0,0)   -- (\e,\e) -- (0,2*\e) -- (-\e,\e) -- (0,0) ;
\fill (0,0)  \c   (\e,\e) \c   (0,2*\e)  \c   (-\e,\e) \c ;
\filldraw[fill=white] (0,0)     (\e,\e)  \c  (0,2*\e)  \c    (-\e,\e)  \c;
\draw (0,\lab) node {$H_2$} ;
}

\def\confC{
\draw (0,0)   -- (\e,\e) -- (0,2*\e) -- (-\e,\e) -- (0,0) ;
\fill (0,0)  \c   (\e,\e) \c   (0,2*\e)  \c   (-\e,\e) \c ;
\filldraw[fill=white] (0,0)   \c  (\e,\e)    \c (0,2*\e)   \c  (-\e,\e)  ;
\draw (0,\lab) node {$H_3$} ;

}
\def\confD{
\draw (0,0)   -- (\e,\e) -- (0,2*\e) -- (-\e,\e) -- (0,0) ;
\fill (0,0)  \c   (\e,\e) \c   (0,2*\e)  \c   (-\e,\e) \c ;
\filldraw[fill=white] (0,0)   \c  (\e,\e)    (0,2*\e)   \c  (-\e,\e)  ;
\draw (0,\lab) node {$H_4$} ;
}

\def\confE{
\draw (0,0)   -- (\e,\e) -- (0,2*\e) -- (-\e,\e) -- (0,0) ;
\fill (0,0)  \c   (\e,\e) \c   (0,2*\e)  \c   (-\e,\e) \c ;
\filldraw[fill=white] (0,0) \c    (\e,\e)  \c  (0,2*\e)     (-\e,\e)  ;
\draw (0,\lab) node {$H_5$} ;
}

\def\confEE{
\draw (0,0)   -- (\e,\e) -- (0,2*\e) -- (-\e,\e) -- (0,0) ;
\fill (0,0)  \c   (\e,\e) \c   (0,2*\e)  \c   (-\e,\e) \c ;
\filldraw[fill=white] (0,0)   (\e,\e)  \c  (0,2*\e)  \c   (-\e,\e)  ;
\draw (0,\lab) node {$H_6$} ;
}

\def\confF{
\draw (0,0)   -- (\e,\e) -- (0,2*\e) -- (-\e,\e) -- (0,0) ;
\fill (0,0)  \c   (\e,\e) \c   (0,2*\e)  \c   (-\e,\e) \c ;
\filldraw[fill=white] (0,0)     (\e,\e) \c   (0,2*\e)     (-\e,\e) \c ;
\draw (0,\lab) node {$H_7$} ;
}
\def\confG{
\draw (0,0)   -- (\e,\e) -- (0,2*\e) -- (-\e,\e) -- (0,0) ;
\fill (0,0)  \c   (\e,\e) \c   (0,2*\e)  \c   (-\e,\e) \c ;
\filldraw[fill=white] (0,0)     (\e,\e)  \c  (0,2*\e)     (-\e,\e)  ;
\draw (0,\lab) node {$H_8$} ;
}
\def\confH{
\draw (0,0)   -- (\e,\e) -- (0,2*\e) -- (-\e,\e) -- (0,0) ;
\fill (0,0)  \c   (\e,\e) \c   (0,2*\e)  \c   (-\e,\e) \c ;
\filldraw[fill=white] (0,0)  \c   (\e,\e)    (0,2*\e)     (-\e,\e)  ;
\draw (0,\lab) node {$H_9$} ;
}
\def\confHH{
\draw (0,0)   -- (\e,\e) -- (0,2*\e) -- (-\e,\e) -- (0,0) ;
\fill (0,0)  \c   (\e,\e) \c   (0,2*\e)  \c   (-\e,\e) \c ;
\filldraw[fill=white] (0,0)    (\e,\e)    (0,2*\e)   \c    (-\e,\e)  ;
\draw (0,\lab) node {$H_{10}$} ;
}
\begin{tikzpicture}
[insep/.style={inner sep=1.7pt, outer sep=0pt, circle, fill}, 
noin/.style={inner sep=0pt, outer sep=0pt, circle, fill},
list/.style={inner sep=0.0pt, outer sep=0pt, rectangle,fill=white},
precol/.style={inner sep=1.8pt, outer sep=0pt, circle, fill},
three/.style={inner sep=1.5pt, outer sep=0pt, regular polygon,regular polygon sides=3, draw,fill=white},
four/.style={inner sep=1.8pt, outer sep=0pt, regular polygon,regular polygon sides=4, draw,fill=white},
five/.style={inner sep=1.8pt, outer sep=0pt, regular polygon,regular polygon sides=5, draw,fill=white}
]

\begin{scope}[xshift=0cm, yshift = 0cm] 
\confA 
\end{scope}
\begin{scope}[xshift=\s cm, yshift = 0cm] 
\confB
\end{scope}
\begin{scope}[xshift=2*\s cm, yshift = 0cm] 
\confBB
\end{scope}
\begin{scope}[xshift=3*\s cm, yshift = 0cm] 
\confC
\end{scope}
\begin{scope}[xshift=4*\s cm, yshift = 0cm] 
\confD
\end{scope}
\begin{scope}[xshift=5*\s cm, yshift = 0cm] 
\confE
\end{scope}
\begin{scope}[xshift=0.5*\s cm, yshift = -2.8cm] 
\confEE
\end{scope}
\begin{scope}[xshift=1.5*\s cm, yshift = -2.8cm] 
\confF
\end{scope}
\begin{scope}[xshift=2.5*\s cm, yshift =-2.8cm] 
\confG
\end{scope}
\begin{scope}[xshift=3.5*\s cm, yshift = -2.8cm] 
\confH
\end{scope}
\begin{scope}[xshift=4.5*\s cm, yshift =-2.8cm] 
\confHH
\end{scope}
\end{tikzpicture}}
\end{center}
\caption{$\cH_2$: $Q_2$-free subsets of $M_2$.}
\label{fig-middH}
\end{figure}
%and see Figure \ref{fig-middF} for the two flags we used. 
%An additional constraint is that the labeled vertex is from $A_n$ or $C_n$, and the unlabeled vertex will be in $B_n$. A black dot mean it contains that subset of $[n]$, a white dot means does not contain.
An additional constraint for the flags is that the labeled vertex is from $A_n$ or $C_n$, and the unlabeled vertex is from $B_n$. 
A black vertex indicates that the corresponding subset of $[n]$ is present in the subposet and  a white vertex indicates the opposite.
%Notice that due to using $\max$ in the final bound, $\cH_2$ does not need to contain subposets where vertices
%$A_n$ and $C_n$ entries are interchanged.

\begin{figure}
\begin{center}
{\def\e{0.8}
\def\c{ circle(3pt) }
\def\s{3}
\def\theta{\draw[fill=gray!30] (0,\e)  circle (6pt) ;}
\def\lab{-.6}

\def\confA{
\theta
\draw (0,\e)   -- (-\e,0);
\fill (0,\e)  \c   (-\e,0) \c  ;
\filldraw[fill=white]    (-\e,0) \c;
\draw (-0.5*\e,\lab) node {$F_0$} ;
}

\def\confB{
\theta
\draw (0,\e)   -- (-\e,0);
\fill (0,\e)  \c   (-\e,0) \c  ;
\draw (-0.5*\e,\lab) node {$F_1$} ;
}

\begin{tikzpicture}

\begin{scope}[xshift=0cm, yshift = 0cm] 
\confA 
\end{scope}
\begin{scope}[xshift=\s cm, yshift = 0cm] 
\confB
\end{scope}

\end{tikzpicture}}
\end{center}
\caption{Two flags with one labeled vertex.}
\label{fig-middF}
\end{figure}
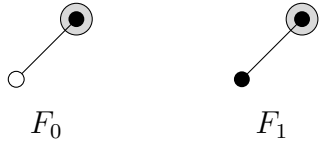

Given $G_n\in \cH_n$, let $p(H_i, G_n)$ be the probability that a random subset $D\simeq Q_2$ of $M_n$ chosen uniformly at random satisfies $D\cap G_n \simeq H_i$, then

$$\d(G_n) = \sum_{i} \d(H_i)p(H_i, G_n).$$

For the flags, for a vertex $\theta$ in $A_n\cup C_n$, we define $p(F_i,\theta,G_n)$ to be the probability that a random vertex $v$ from $B_n$ that is adjacent to $\theta$ (i.e. the set corresponding to $v$ contains the set corresponding to $\theta$ or is in $\theta$) satisfies $\{\theta,v\}\simeq F_i$. We also  define $p(F_i,F_j,\theta,G_n)$ to be the probability that two random vertices $u\ne v$ from $B_n$ that are adjacent to $\theta$ satisfy $\{\theta,u\}\simeq F_i$ and $\{\theta,v\}\simeq F_j$. 
A lemma analogous to Lemma~\ref{lem:productdensity} can be proven, we omit the details. Hence we can apply flag algebra method to this setup and get Table \ref{tab-M}.

\begin{table}[ht]
\begin{center}
\begin{tabular}{|c|c|c|c|c|c|c|c|c|c|c|c|}\hline 
         & $H_0$ & $H_1$ & $H_2$ & $H_3$ & $H_4$ & $H_5$ & $H_6$ & $H_7$ & $H_8$ & $H_9$ & $H_{10}$ \\ \hline 
             $\d$ & 0 & 1  & 1 & 1/2 & 1 & 3/2 & 3/2 & 2 & 5/2 & 2 & 2 \\\hline 
 $F_0, F_0$ & 0 & 1/2 & 1/2 & 0 & 0 & 0 & 0 & 1 & 0 & 0 & 0 \\\hline 
 $F_0, F_1$ & 0 & 0 & 0 & 0 & 0 & 1/4 & 1/4 & 0 & 1/2 & 0& 0  \\\hline 
 $F_1, F_1$ & 0 & 0 & 0 & 0 & 0 & 0 & 0 & 0 & 0 & 1/2 & 1/2\\\hline \end{tabular}  
\end{center}
\caption{$\d(H_k)$ and $\E_{\theta}p(F_i,F_j,\theta,H_k)$.}
\label{tab-M}
\end{table}

Then a semidefinite matrix 
$$M=
\left( \begin{array}{cc}
\frac{\sqrt{2}-1}{2} & \frac{\sqrt{2}-2}{2} \\ \frac{\sqrt{2}-2}{2} & \sqrt{2}-1
\end{array}\right )$$
gives the desired bound $\frac{3+\sqrt{2}}{2}$.

\begin{figure}
\begin{center}
{\input{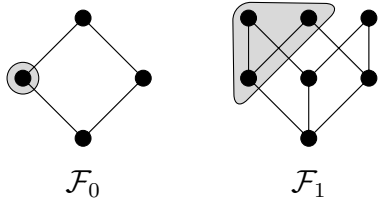}}
\end{center}
\caption{Flags families used in the computer assited proof.}
\label{fig-middF-comp}
\end{figure}

The proof of Theorem~\ref{thm:middle} goes along the same lines as for $\frac{3+\sqrt{2}}{2}$. One difference
is that three middle layers of $Q_4$ are considered instead of $Q_2$. The number of $Q_2$-free subgraphs
is 606.  The other difference is that we use flag families depicted in Figure~\ref{fig-middF-comp}.
Each family contains flags obtained from the depicted ones by coloring the vertices black and white.
Sources of a program for generating $Q_2$-free subgraphs and computing an analog of Table~\ref{tab-M}
are available at \oururl.

\section{Conclusion}

We presented an adaptation of Razborov's flag algebra method
to subgraphs of the hypercube. Using the adaptation we obtained
new upper bounds on densities in limit on 4-cycle and 6-cycle free subgraphs
of the hypercube. 

We suspect that the method can give a better bound when applied to
the hypercubes of dimension greater than 3. 
However, we found $3212821$ $C_4$-free spanning subgraphs of $\Q_4$. 
The resulting semidefinite program is currently too large for CSDP. 

We were trying to reduce the number of considered $C_4$-free subgraphs
by identifying those with the same $\d(H)+c_H$. The only set of flags we discovered
that was leading to a solvable semidefinite program was consisting of flags
whose vertices induce a star in the hypercube. 
See $F_1$ in Figure~\ref{fig-okflag} for an example. 
In this setting $\d(H_1)+c_{H_1} = \d(H_2)+c_{H_2}$ if $C_4$-free spanning subgraphs 
$H_1$ and $H_2$ have the same
degree sequence. Unfortunately, the resulting bounds were worse than the
bounds obtained from $\Q_3$ and square like flags.

Maybe a good set of flags, a better solver or just some future hardware can make such problems solvable.

\bibliographystyle{habbrv}
\bibliography{refs}

\end{document}